\newtheorem{theorem}{Theorem}
\newtheorem{lemma}[theorem]{Lemma}
\newenvironment{proof}%
    {{\sc Proof.}}%
  {{\sc q.e.d.} \\}
\newenvironment{remark}%
    {\noindent {\bf Remark.}}%
  {{\sc } \\}
    {\noindent {\bf Definition  }}%
  {{\sc } \\}%
\newenvironment{proofclaim}%
    {{\sc Proof of Claim.}}%
  {{\sc q.e.d.(Claim)} \\}
\begin{document}

\title{Local uniformization  and free boundary regularity
of minimal singular surfaces } \maketitle

\maketitle
\begin{center}

\begin{tabular}{ccc}
Chikako Mese
 &  &
Sumio Yamada
\\ Department of Mathematics & &Mathematical Institute\\ Johns Hopkins
University & & Tohoku University\\ {\tt cmese@math.jhu.edu} & & {\tt
yamada@math.tohoku.ac.jp}\\
\end{tabular}
\end{center}
\noindent $^*${\scriptsize Research for the first author partially
supported by grant NSF DMS-0604930 and the second by JSPS Grant-in-Aid for Scientific
Research No.17740030.}
\renewcommand{\abstractname}{Abstract}
\begin{abstract}
In continuing the study of harmonic mapping from 2-dimensional Riemannian simplicial complexes 
in order to construct minimal surfaces with singularity, we obtain an a-priori regularity result concerning the  
real analyticity of the free boundary curve.  The free boundary is the singular set along which three disk-type minimal surfaces meet.  Here the configuration of the singular minimal surface is obtained by a minimization of
a weighted energy functional, in the spirit of J.Douglas' approach to the Plateau Problem.
Using the free boundary regularity of the harmonic map, we construct a local uniformization of the singular surface as a parameterization of a neighborhood of a point on the free boundary by the singular tangent cone.
In addition, applications of the local uniformization are discussed in relation to H.Lewy's real analytic extension
of minimal surfaces.
\end{abstract}

\section{Introduction}

The classical Plateau Problem is the problem of finding a surface minimizing the area amongst all surfaces which are images of a  map from a disk and spanning a given Jordan curve.  We can formulate this problem more precisely as follows.  Let $\triangle$ be the unit disk in ${\bf R}^2$.      The area of a  map $\alpha:\triangle \rightarrow {\bf R}^n $ is 
\[
A(\alpha)=\int_{\triangle} \sqrt{ \left| \frac{\partial \alpha}{\partial x} \right|^2 + \left| \frac{\partial \alpha}{\partial y} \right|^2-2  \frac{\partial \alpha}{\partial x} \cdot \frac{\partial \alpha}{\partial y}}
\ dxdy.
\] 
{\bf The classical Plateau Problem.}  {\it Given a  Jordan curve $\Upsilon$, let 
\[
{\cal F}=\{\alpha:\overline{\triangle} \rightarrow {\bf R}^n: \alpha \in W^{1,2}(\triangle) \cap C^0(\overline{\triangle}) \mbox{ and } \alpha\big|_{\partial \triangle}:\partial \triangle \rightarrow \Upsilon \mbox{ is a homeomorphism}\}.
\]
Find $\alpha^* \in {\cal F}$ so that $A(\alpha^*) \leq A(\alpha)$ for all $\alpha \in {\cal F}$.} \\
\\
The classical Plateau Problem can be solved by finding a map $\alpha^*$ which is conformal and harmonic.   The study of classical minimal surfaces via harmonic maps is now well developed, starting with the solution of this problem by J.Douglas~\cite{douglas} in the 1930's. 

Our interest in this paper is a singular version of this problem.  
In~\cite{mese-yamada}, we investigated
properties of 1-dimensional (2-dimensional resp.) subsets of a Euclidean space which minimize length (area resp.)  amongst all the sets sharing a certain boundary set.  Due to the shape of  the boundary set, the minimizer is not a curve (surface resp.) but a union of curves (surfaces resp.).
A variational formulation of 
this problem was presented in~\cite{mese-yamada} 
as a minimization problem for 
a certain energy functional of maps from a suitable polyhedral domain (or a  finite complex).   
Continuing the study of the two dimensional version of the problem, we refer to a singular surface as a union of the images of three maps, each from a disk joined along a curve $\gamma$ and bounding a graph $\Gamma$ as pictured below.  The boundary set $\Gamma$ in this picture is the graph
consisting of three edges meeting at two vertices, and the curve $\gamma$ is the set where three disk-type surfaces
are meeting.  In the singular Plateau Problem, we look for a singular surface which minimizes area amongst all other singular surfaces bounding the given graph $\Gamma$.  \\

\begin{figure}[ht]
\begin{center}
\includegraphics[scale=0.35]{./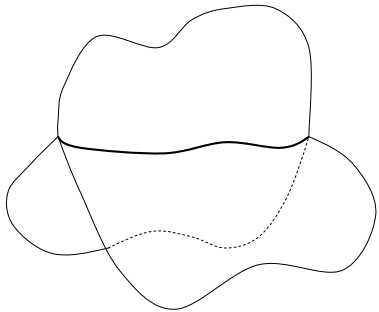}
\end{center}
\caption{Singular Surface}
\label{fig:surface}
\end{figure}

 In order to capture the area-minimizer as an energy-minimizer in our singular setting where  several (three) energy functionals are being minimized concurrently, we introduced in~\cite{mese-yamada} a weighted $l^2$-energy.  As the name suggests,  this functional is a quadratic combination of Dirichlet energies with respect to some weights.  Its minimizer in the spaces of maps, weights, and conformal structures/glueing maps  is shown to be the area minimizer.  Furthermore, the minimizing map is  harmonic and weakly conformal.  
The extension of the classical harmonic map theory to singular spaces was initiated by work of 
Gromov-Schoen \cite{gromov-schoen} 
and also Korevaar-Schoen \cite{korevaar-schoen1}, \cite{korevaar-schoen2} 
where singular target  spaces are considered.  The study of harmonic maps from a polyhedral space is considered in J. Chen \cite{chen}, 
Eells-Fuglede \cite{eells-fuglede}, 
Fuglede \cite{fuglede1} \cite{fuglede2} \cite{fuglede3} \cite{fuglede4} and  
Daskalopoulos-Mese \cite{daskal-mese1} \cite{daskal-mese2} \cite{daskal-mese3} \cite{daskal-mese4}.  The use of the energy functional in singular minimal surface theory is a natural application of the singular harmonic map theory.

The best known singular surface minimizing the area  is probably the $(M, 0, \delta)$-minimal sets in ${\bf R}^3$ (cf. \cite{almgren}, \cite{T}).
J.Taylor~\cite{T} showed that in the interior of a $(M, 0, \delta)$-minimal set, there are only two types of
singularities.   One is the {\bf Y}-type, characterized by  three minimal surfaces meeting at $120^\circ$ degrees
along a 
$C^{1,\alpha}$ curve (which we will call a free boundary).  The other is the {\bf T}-type, characterized by four of those free boundary curves
meeting at a vertex, whose tangent cone is the cone over the regular tetrahedron and 
each free boundary is $C^{1, \alpha}$ up to the vertex point ($0<\alpha< 1$). 
In~\cite{mese-yamada}, we attempted to construct the singular configuration of three surfaces meeting along a
free boundary as an image of a harmonic map from a simplicial complex.  The key technical aspect 
in this approach is that there is an infinite dimensional moduli space $\cal P$ of conformal structures defined on the 
domain complex which is homeomorphic to three standard 2-simplices  glued along a common 1-simplex.  Here the infinite
dimensionality is caused by the different ways of glueing the three copies of the 2-simplices.   
The fact that the free boundary is real analytic in the $(M,0,\delta)$-minimal setting  suggests that
the gluing maps one needs to consider for a minimizing element is smooth.

The first result (Theorem~\ref{kreg}) in this paper is that when the conformal structure of a minimizing element of our variational problem is induced by
a $C^{2, \alpha}$-smooth gluing map of 2-simplices, then the minimal singular surface has real analytic free boundary.
It is an a-priori estimate, not unlike the result of Kinderlehrer-Nirenberg-Spruck~\cite{KNS}, in the sense that given a minimal configuration of a variational
problem with a sufficient regularity, then the actual regularity is much better.   Recall that  \cite{KNS} proves that  the  area minimizing singular surfaces of {\bf Y}-type in ${\bf R}^3$ which was shown to be $C^{1,\alpha}$ by \cite{T} is in fact real analytic.
The phenomena is accounted by the general principle of Lopatinski-Shapiro's, where an elliptic  system of partial differential equations satisfies a coercive boundary condition.  

In order to set up our variational formulation according to this principle, we define  a weighted $l^1$-functional of the Dirichlet energies.
We show (Lemma~\ref{k-min/c-min}) that minimizing the $l^1$-energy is equivalent to minimizing the $l^2$-energy.    The $l^1$-energy minimizer satisfies the matching and balancing conditions (explained more precisely below, see (\ref{mf}) and (\ref{bf})) at the free boundary. This can be recognized as the coercive boundary condition in the sense of Lopatinski-Shapiro.

Having established the real analyticity of the free boundary, we geometrically characterize the
image of the minimizing map.  Where the map is non-degenerate, namely away from the set $S$ of isolated branch points,  the image is  locally either  an embedded minimal surface or a union of three minimal surfaces
meeting along a real analytic curve at $\pi/3$ angle (Theorem~\ref{three}).  Note that this is consistent with the description of 
$(M, 0, \delta)$-minimal surface by~\cite{T}.   Near a point on the free boundary parameterized by the non-degenerate map, the real analyticity of the three surface meeting along the real analytic free boundary is used to show that each of the induced conformal structures on the three minimal surfaces can be locally uniformized by a half disk, with a matching 
parameterization of the free boundary.  This in turn says that there is a  conformal (and harmonic) map 
locally defined on the unit ball (denoted $Y_0$) about the vertex of the tangent cone at a point of the singular point.   Thus we establish a local uniformization
(Theorem~\ref{isothermal})
of the minimal singular surface away from the branch points.  

Note that if the minimal surface is non-singular, this is nothing but the existence of an isothermal coordinate system.  
The important difference in the two characterizations of
minimal surfaces via a conformal harmonic map is that in the
classical Plateau Problem case,  the parameterization is of the entire disk-type
minimal surface, while in the singular case we only assert the
existence of a neighborhood of a {\bf Y}-type singular point conformally
equivalent to $Y_0$.    This is because any two disk-type
surfaces are conformally equivalent by the uniformization theorem
which has no analog in the singular setting. 
From this viewpoint, the key observation contained  in this paper
and \cite{mese-yamada} is that {\it locally there is only one conformal
structure} $Y_0$ to the singular surfaces while {\it globally there are
many}, parameterized by the infinite dimensional space $\cal P$. We further note that the  conformal structure 
$Y_0$ is conformally equivalent to the trivial element $X_{{\rm Id}}$ (where the glueing via the identity map) of  $\cal P$.

The existence of an isothermal coordinate system for the minimizer of our variational problem is closely related to
the real analytic extension of each surface across the free boundary.  The real analytic extension of a minimal surface
across a real analytic boundary curve is a celebrated result by H.Lewy~\cite{lewy} in the 50's.  The key to his proof is 
Schwarz' reflection principle of holomorphic function.  Having the isothermal coordinate system around a {\bf Y}-type singular point
of the image of our minimizer,  we construct the extension (Theorem~\ref{reflection}) of one surface  via a weighted averaging of the  harmonic maps parameterizing the three surfaces.  We call this construction multi-sheeted reflection.  The averaging works due to the important fact that one can regard these parameterizations as a {\it multi-valued harmonic function} defined on a half-disk and hence taking linear combinations  preserves harmonicity.  

Furthermore when the ambient space is ${\bf R}^3$, the matching and the balancing conditions of the three minimal surfaces along a real analytic free boundary 
give an overdetermined boundary condition. Indeed one can construct the configuration from a single minimal surface with a real analytic 
boundary with the use of Bj\"{o}ling's
method.  In this way, we provide an alternative expression (Theorem~\ref{lewy'}) for the extended surface across the boundary curve.
 
{\it Acknowledgement.  } We would like to thank Joel Spruck for an
explanation of the paper~\cite{KNS}. The second author thanks the Japan-U.S. Mathematics Institute
at Johns Hopkins University for the hospitality during his visits.

\section{Variational formulation}

In order to formulate the singular Plateau Problem precisely, we will need  three copies of $\triangle$ which will be  labeled  $\triangle_i$ for $i=1,2,3$.   Let 
\[
B_i=\partial \triangle_i \cap \{y \geq 0\} \mbox{ and } A_i=\partial \triangle_i \cap \{y \leq 0\}.
\]
We say $\Phi=(\phi_1,\phi_2,\phi_3)$ is a gluing map if $\phi_i:\partial \triangle_i \rightarrow \partial \triangle_i$ is a homeomorphism satisfying the three point condition:

\[
\phi_i(-1,0)=(-1,0), \phi_i(1,0)=(1,0) \mbox{ and } \phi_i(0,1)=(0,1).
\]
We denote the  set of all gluing maps by $\cal P$.

   We say  $\Phi \in {\cal P}$ is $K$-quasisymmetric ($C^k$, $C^{k,\alpha}$ resp.) if $\phi_i$ is $K$-quasisymmetric ($C^k(\partial \triangle_i)$, $C^{k,\alpha}(\partial \triangle_i)$ resp.) for all $i=1,2,3$.      For $\Phi \in {\cal P} \cap  C^1$, we say $\Phi$ is non-singular if 
\[
\frac{d \phi_i}{d\theta}(p) \neq 0
\]
for all $p \in \partial \triangle_i$ where $\theta$ is the arc-length parameter of $\partial \triangle_i$.

An extension of a gluing map $\Phi \in {\cal P}$ is  $\Phi'=(\phi_1',\phi_2',\phi_3')$ where $\phi_i':\overline{\triangle_i} \rightarrow \overline{\triangle_i}$ is a diffeomorphism so that $\phi_i'\big|_{\partial \triangle_i} =\phi_i$  for $i=1,2,3$.   We say an extension $\Phi'$ of $\Phi$ is a $D$-quasiconformal  ($C^k$, $C^{k,\mu}$ resp.)   if $\phi_i'$ is  $D$-quasiconformal ($C^k(\overline{\triangle_i})$, $C^{k,\mu}(\overline{\triangle_i})$ resp.)   for all $i=1,2,3$.   

 An element $\Phi \in {\cal P}$ defines a natural quotient space $X_{\Phi}$ of $\cup \triangle_i$ as follows.  First, we define an equivalent relation of $\cup A_i$ by setting $s \sim_{\Phi} t$ if $s \in A_i$, $t \in A_j$ and $\phi_i^{-1}(s)=\phi_j^{-1}(t)$.  We can then set
\begin{equation} \label{space}
X_{\Phi}=\cup \triangle_i/\sim_{\Phi}.
\end{equation}
The boundary $\partial X_{\Phi}$ of $X_{\Phi}$ is simply  $\cup B_i$.

We note here a difference of treatment concerning $X_{\Phi}$ from our previous paper~\cite{mese-yamada}, where
$X_{\Phi}$ was defined using $\Phi = (\phi_1, \phi_2, \phi_3)$ where $\phi_i$'s are quasiconformal diffeomorphisms
of the disk.  It turns out that $X_\Phi$ is completely determined by the parameterizations of $\partial \triangle_i$, as the different (quasiconformal) extensions inside $\triangle_i$ with the same quasisymmetric boundary map are regarded equivalent.   

Let $\Gamma$ be a graph embedded in ${\bf R}^n$ consisting of three arcs $\Gamma_i$ ($i=1,2,3$) sharing common end points $q_-$ and $q_+$.  We will refer to such $\Gamma$ as the fixed boundary of the Plateau Problem.   We say $\Psi=(\psi_1,\psi_2,\psi_3)$ is a uniform parameterization of $\Gamma$ if  $\psi_i:B_i \rightarrow {\bf R}^n$ is a constant speed parameterization of $\Gamma_i$ so that $q_-=\psi_i(-1,0)$ and $q_+=\psi_i(1,0)$.   

Fix a uniform parameterization $\Psi$ of $\Gamma$ .  For each  $\Phi \in {\cal P}$, we define ${\cal F}_{\Psi}(\Phi)$  as the set of $\alpha=(\alpha_1,\alpha_2,\alpha_3)$ so that $\alpha_i: \overline{\triangle_i} \rightarrow {\bf R}^n  \in W^{1,2}(\triangle_i) \cap C^0(\overline{\triangle_i})$ satisfying 
\[
\alpha_i \circ \phi_i \big|_{A_i} = \alpha_j \circ \phi_j \big|_{A_j} \hspace{0.1in}  \mbox{(matching condition)}
\]
and
\[
\alpha_i \circ \phi_i \big|_{B_i} =\psi_i \hspace{0.1in} \mbox{(boundary  condition)}.
\]
The matching condition says that one can  view $\alpha$ as a continuous map from $X_{\Phi}$ and the boundary condition says that $\alpha$ restricted to $\partial X_{\Phi}$ parameterizes the boundary $\Gamma$ as specified by
$\Psi$ and $\Phi$.  The matching condition implies that $\alpha_i(A_i)=\alpha_j(A_j)$ for $i,j=1,2,3$.  Let $\gamma=\alpha_1(A_1)$.  We will refer to $\gamma$ as the free boundary corresponding to $\alpha$.  
Define the area of $\alpha$ as
\[
A(\alpha)=\sum_{i=1}^3 A(\alpha_i).
\]
We can now formulate the singular version of the classical Plateau Problem as follows: \\
\\
{\bf The singular Plateau Problem.}  {\it Given a uniform parameterization $\Psi$ of  $\Gamma$,  let
\[
{\cal F}= \bigcup_{\Phi \in {\cal P}}{\cal F}_{\Psi}(\Phi).
\] 
Find $\alpha^* \in {\cal F}$ so that $A(\alpha^*) \leq A(\alpha)$ for all $\alpha \in {\cal F}$.  }\\
\\

A well-known method of obtaining the solution of a classical Plateau Problem is by the use of the energy functional.  Motivated by this, we define a relevant notion of energy related to the singular Plateau Problem.  First, we set
\[
{\cal C}=\{c=(c_1,c_2,c_3) \in {\bf R}^3 | c_1+c_2+c_3=1, c_i \geq 0 \mbox{ for } i=1,2,3\}.
\]
For a given  $c \in {\cal C}$, we say that $\alpha \in {\cal F}_{\Psi}(\Phi)$ is compatible with $c$ if $E(\alpha_i)=0$ whenever $c_i=0$.  Otherwise, we say $\alpha$ is incompatible with $c$.  We define the area of $\alpha$ as 
\[
A(\alpha)=\sum_{i=1}^3 A(\alpha_i)
\]
and the $l_2$-energy with weight $c$ as 
\[
E_c(\alpha)=
\left\{
\begin{array}{ll}
\left( \displaystyle{\sum_c} \frac{1}{c_i} (E(\alpha_i))^2  \right)^{1/2} & \mbox{ if $\alpha$ is compatible with $c$}\\
\infty &  \mbox{ if $\alpha$ is in compatible with $c$}
\end{array}
\right.
\]
where $\displaystyle{\sum_c}$ denotes the sum over $i$ with $c_i \neq 0$.   \\

\noindent {\bf The Variational Problem.} \ Find $\Phi^* \in {\cal P}, c^* \in {\cal C}$ and $\alpha^* \in {\cal F}_{\Psi}(\Phi)$ satisfying the equality
\begin{equation} \label{variationalproblem}
E_{c^*}(\alpha^*)= \inf_{\Phi \in {\cal P}} \inf_{ c \in {\cal C}} \inf_{\alpha \in {\cal F}_{\Psi}(\Phi)} E_c(\alpha)
\end{equation}

The following theorem says that solving the variational problem above solves singular Plateau Problem.  \\

\noindent {\bf Theorem} \cite{mese-yamada} 
{\it 
Suppose  $\Phi^* \in {\cal P}, c^* \in {\cal C}$ and $\alpha^* \in {\cal F}_{\Psi}(\Phi^*)$ satisfies (\ref{variationalproblem}).  Then $\alpha$ solves the singular Plateau Problem, i.e.
\[
A(\alpha^*) \leq A(\alpha) \ \mbox{ for all }  \ \alpha \in \bigcup_{\Phi \in {\cal P}} {\cal F}_{\Psi}(\Phi).
\]
Moreover, $\alpha^*_i$ is a weakly conformal map and 
\[
c^*_i= \frac{E(\alpha^*_i)}{\displaystyle{\sum_{c^*}} E(\alpha^*_j) }.
\]
}\\

We have the following existence theorem.  \\

\noindent {\bf Theorem} (cf.~\cite{mese-yamada})
{\it Let ${\cal P}(K)$ be a subset of ${\cal P}$ which are $K$-quasisymmetric.  For each $K \in [1,\infty)$, there exists $\Phi^K \in {\cal P}(K), c^K \in {\cal C}$ and $\alpha^K \in {\cal F}_{\Psi}(\Phi^K)$ satisfying
\begin{equation} \label{Kqs}
E_{c^K}(\alpha^K)= \inf_{\Phi \in {\cal P}(K)} \inf_{ c \in {\cal C}} \inf_{\alpha \in {\cal F}_{\Psi}(\Phi)} E_c(\alpha).
\end{equation} 
 In particular, if
\[
\inf_{\Phi \in {\cal P}} \inf_{ c \in {\cal C}} \inf_{\alpha \in {\cal F}_{\Psi}(\Phi)} E_c(\alpha)=\inf_{\Phi \in {\cal P}(K)} \inf_{ c \in {\cal C}} \inf_{\alpha \in {\cal F}_{\Psi}(\Phi)} E_c(\alpha),
\]
for some sufficiently large $K$, then we can solve the singular Plateau Problem.  }\\

\begin{proof}
Let $\Phi^{(k)}  \in {\cal P}(K), c^{(k)} \in {\cal C}$ be a minimizing sequence in the sense that
\[
\lim_{k \rightarrow \infty} \inf_{\alpha \in {\cal F}(\Phi^{(k)})}E_{c^{(k)}} (\alpha)=\inf_{\Phi \in {\cal P}(K)} \inf_{ c \in {\cal C}} \inf_{\alpha \in {\cal F}_{\Psi}(\Phi)} E_c(\alpha).
\]
We recall that $K$-quasisymmetric maps have $D$-quasiconformal extensions for some $D$ (cf. \cite{leite}).  Let $\Phi^{(k)'}$ be a $D$-quasiconformal extention of $\Phi^{(k)}$.  By a slight modification of the proof of Theorem 14 of \cite{mese-yamada}, there exists a $c^{(k)}$-energy minimizing map $\alpha^{(k)} \in {\cal F}_{\Psi}(\Phi^{(k)})$, i.e.  
\[
E_{c^{(k)}} (\alpha^{(k)}) = \inf_{\alpha \in {\cal F}_{\Psi}(\Phi^{(k)} )} E_{c^{(k)}} (\alpha).
\]  
(We note that the modification of the proof of Theorem 14 only involves the observation that $\alpha_i^{(k)} \circ \varphi_i^{(k)}$ has bounded energy independent of $k$ since $\varphi_i^{(k)}$ is $D$-quasiconformal.) Following the proof of Theorem 16 of \cite{mese-yamada}, we see that (by taking a subsequence of $k$ if necessary) $\Phi^{(k)} \rightarrow \Phi^K, c^{(k)} \rightarrow c^K, \alpha^{(k)}  \rightarrow \alpha^K \in {\cal F}_{\Psi}(\Phi^K)$ and these limits satisfy (\ref{Kqs}).  \end{proof}

Our goal in the following sections is to investigate the minimizing map $\alpha^*$ of (\ref{variationalproblem})  in the case we have an a priori $C^{2,\alpha}$ regularity along with a non-singular condition of the gluing map $\Phi^*$.  

\section{Regularity of energy minimizing maps}

Fix $\Phi \in {\cal P}$ and $c \in {\cal C}$.  We say $\alpha \in {\cal F}_{\Psi}(\Phi)$ is a $c$-weighted $l^2$-energy minimizing map if 
\[
E_c(\alpha) = \inf_{\alpha' \in {\cal F}_{\Psi}(\Phi)} E_c(\alpha').
\]
In this section, we prove higher regularity of  $\alpha$ assuming a regularity condition on the gluing map $\Phi$.  In order to do so, we define another notion of energy which we call the  $l^1$-energy.  

Let 
$k=(k_1,k_2, k_3)$ with $k_i>0$ for  $i=1,2,3$.     Given $\alpha \in {\cal F}_{\Psi}(\Phi)$, the $k$-weighted $l^1$-energy of a map $\alpha$ is given as
\[
E^k(\alpha) = \sum_{i=1}^n k_i E(\alpha_i).
\]
We say $\alpha \in {\cal F}_{\Psi}(\Phi)$ is a $k$-weighted $l^1$-energy minimizing map if

\[
E^k(\alpha) = \inf_{\alpha' \in {\cal F}_{\Psi}(\Phi)} E^k(\alpha')
\]
We prove the following regularity result for an $l^1$-energy minimizing map.  

\begin{theorem} \label{kreg}
For any  $k=(k_1,k_2,k_3)$ with $k_i >0$ and $\Phi \in {\cal P}$, let $\alpha \in {\cal F}_{\Psi}(\Phi)$ be a $k$-weighted $l^1$-energy minimizing map.  If $\Phi \in C^{2,\mu}$, then there exists an extension $\Phi'$  of $\Phi$ so that $\Phi' \in C^{2,\nu}$ for any $0<\nu<\mu$ and $\alpha_i \circ \phi'_i$ is real analytic in $\triangle_i \cup A_i$.  In particular, $\alpha_i \circ \phi_i$ is a real analytic  on $A_i$. 
\end{theorem}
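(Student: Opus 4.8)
The plan is to extract the Euler--Lagrange conditions satisfied by $\alpha$, recognize the resulting free boundary problem as an elliptic system carrying a coercive boundary condition in the sense of Lopatinski--Shapiro, and then run the a priori regularity bootstrap of Kinderlehrer--Nirenberg--Spruck \cite{KNS}, passing first to smoothness and then to real analyticity. First I would vary $E^k$. Variations supported in the interior of a single $\triangle_i$ give $\Delta\alpha_i=0$, so each $\alpha_i$ is harmonic and already real analytic in $\triangle_i$. A variation that respects the matching condition is encoded by a single arbitrary deformation $W$ of the common free boundary trace, vanishing on $B_i$ to preserve the boundary condition; integrating the first variation $\sum_i k_i\int_{\triangle_i}\nabla\alpha_i\cdot\nabla V_i=0$ by parts, using harmonicity, and changing variables by $\phi_i$ along $A_i$ produces the balancing identity
\[
\sum_{i=1}^{3}k_i\,\Big|\frac{d\phi_i}{d\theta}\Big|\,\Big(\frac{\partial\alpha_i}{\partial n_i}\circ\phi_i\Big)=0 \qquad \mbox{along } A_i,
\]
to be coupled with the matching condition $\alpha_i\circ\phi_i|_{A_i}=\alpha_j\circ\phi_j|_{A_j}$ and the prescribed data $\alpha_i\circ\phi_i|_{B_i}=\psi_i$. (This is precisely why the $l^1$ functional is the right one: its Euler--Lagrange relations are the matching and balancing conditions, which are linear in the conormal derivatives.)

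Next I would fix the domain. Take $\phi_i'$ to be the Poisson extension of $\phi_i$; by the Rad\'o--Kneser--Choquet theorem it is a diffeomorphism of $\overline{\triangle_i}$ onto the convex region $\overline{\triangle_i}$, it is real analytic in $\triangle_i$, and by Kellogg-type boundary regularity it belongs to $C^{2,\nu}(\overline{\triangle_i})$ for every $\nu<\mu$. Setting $\beta_i:=\alpha_i\circ\phi_i'$, each component of $\beta_i$ solves a divergence-form second order elliptic equation $L_i\beta_i=0$ whose coefficients are real analytic in $\triangle_i$ and H\"older up to $A_i$; moreover $\beta_i|_{B_i}=\psi_i$, the matching condition becomes $\beta_1=\beta_2=\beta_3$ on the now-common free boundary segment $A$, and the balancing condition becomes $\sum_i k_i{\cal N}_i\beta_i=0$ on $A$, where ${\cal N}_i$ is the conormal derivative of $L_i$.

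The decisive step is to verify that the system $(L_1,L_2,L_3)$ together with the $2n$ matching equations $\beta_1-\beta_2=0$, $\beta_2-\beta_3=0$ and the $n$ balancing equations is an elliptic boundary value problem of Agmon--Douglis--Nirenberg type satisfying the complementing (Lopatinski--Shapiro) condition along $A$. I would freeze the coefficients at a point of $A$, reduce each principal part to the Laplacian, and Fourier transform in the tangential variable: a bounded decaying solution of the homogeneous model is $\widehat\beta_i(\xi,\tau)=\widehat c_i\,e^{-|\xi|\tau}$, matching forces $\widehat c_1=\widehat c_2=\widehat c_3$, and the balancing relation then reads $\big(\sum_i k_i\lambda_i\big)|\xi|\,\widehat c_1=0$ with frozen conormal weights $\lambda_i>0$, so $\widehat c_1=0$. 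Hence the only decaying solution is trivial, the complementing condition holds, and the ADN/Schauder a priori estimates are available near $A$. This is exactly where positivity of the weights $k_i$ --- together with non-degeneracy of $\Phi$, which keeps the factors $|d\phi_i/d\theta|$ and hence the $\lambda_i$ positive --- enters: it is what keeps the Lopatinski--Shapiro determinant from vanishing. I expect this coupling check to be the main obstacle, since, in contrast with a one-phase free boundary problem, here three elliptic pieces meet along a single free boundary and one must confirm that the particular combination ``equality of the three traces plus vanishing of the $k$-weighted sum of conormal derivatives'' is coercive.

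Finally I would run the bootstrap. Starting from the Lipschitz--H\"older regularity of the energy minimizer up to $A$ supplied by the general harmonic map theory used in \cite{mese-yamada}, the coercive estimates promote $\beta_i$ to be smooth up to $A$; in particular the free boundary $\gamma=\beta_i|_A$ is a smooth curve and each surface piece is smooth up to $\gamma$. With the free boundary smooth I would straighten it by a partial hodograph/Legendre transformation as in \cite{KNS}, using a component of the solution (equivalently arc length along $\gamma$) as a new tangential coordinate; this converts the pair ``harmonic system plus matching plus balancing'' into a quasilinear elliptic system with an analytic structure, the analyticity entering through the Laplacian governing the $\alpha_i$ and through the (real analytic) fixed data. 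Morrey's analytic regularity theorem for elliptic systems satisfying the complementing condition, in its boundary version, then forces $\beta_i$ to be real analytic up to $A$. Tracing back through $\phi_i'$ one concludes that $\alpha_i\circ\phi_i'$ is real analytic in $\triangle_i\cup A_i$, and its restriction $\alpha_i\circ\phi_i=\gamma$ is real analytic on $A_i$, as claimed.
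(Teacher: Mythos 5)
Your overall architecture coincides with the paper's: derive the Euler--Lagrange system (interior harmonicity plus the matching and balancing conditions on $A$, with the weighted conormal sum exactly as you computed), transplant everything to a fixed flat interface via a $C^{2,\nu}$ extension of the gluing map (the paper uses the Li--Tam harmonic quasiconformal extension rather than the Poisson extension, which has the advantage that quasiconformality keeps the pulled-back metric uniformly elliptic up to the boundary), verify the Lopatinski--Shapiro/complementing condition for the three coupled scalar equations with the $2n$ matching and $n$ balancing relations --- your frozen-coefficient computation is essentially the paper's coercivity check --- and then invoke the analytic regularity theory of \cite{KNS}, \cite{ADN2}, \cite{Mo}. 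Two remarks on the final stage: the hodograph/Legendre transformation is an unnecessary detour here, because unlike the genuinely free interfaces of \cite{KNS} the interface $A$ is already a fixed flat segment of the domain and the system with its boundary conditions is already linear; the paper applies the analytic regularity theorem directly on the half-disk.

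There is, however, a genuine gap at the step ``the coercive estimates promote $\beta_i$ to be smooth up to $A$.'' The ADN/Schauder machinery for the coercive system presupposes that $\beta$ is a classical solution of the boundary value problem --- in particular that the conormal derivatives exist on $A$ in a pointwise sense and satisfy the balancing condition there --- whereas what you have at the outset is only a Lipschitz energy minimizer satisfying the balancing condition distributionally through the first variation identity. Bridging this is the bulk of the paper's proof: a tangential difference-quotient argument exploiting the variational structure (test the first variation with $\delta_{-h}(\zeta^2\delta_h f)$) shows $\partial_x u\in W^{1,2}_{loc}$ and then, iterating, $\partial_x^2 u\in W^{1,2}_{loc}$ up to $I$, whence the trace of $\partial_x^2 u$ on $I$ is in $L^2_{loc}$ and $u|_I\in C^{1,\beta}$ for $\beta\le 1/2$; a further divergence-form Schauder step (cf.\ Lemma 5.1 of \cite{KNS}) then yields $C^2$ up to $I$, and only at that point does the linear coercive theory apply to give $C^\omega$. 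Only tangential difference quotients can be used here, precisely because they preserve the matching constraint and the admissible class of competitors; asserting smoothness up to $A$ directly from coercivity assumes the conclusion of this weak-to-strong passage.
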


\begin{remark}
Note here that such a gluing data $\Phi$ in $\cal P$ belongs to ${\cal P}(K)$ for some $K < \infty$. Hence the hypothesis
of this theorem fits in the context of the existence theorem~\cite{mese-yamada} referred above.
\end{remark}

\begin{proof}
By \cite{li-tam} (Theorem 4.3), there exists an extension  $\phi'_i : \triangle \rightarrow \triangle$ of $\phi_i : S^1 \rightarrow S^1$ so that $\phi'_i$ is  a harmonic map from the Poincare disk $(\triangle, g_{{\rm hyp}})$ to the Poincare disk, which is quasiconformal and $\phi'_i \in C^{2,\nu}(\overline{\triangle_i})$ for any $0<\nu<\mu$ .   The harmonicity implies that $\phi'_i$ is real analytic in the interior $\triangle_i$.  

Let $\triangle_i^+=\{(x,y) \in \triangle_i: y >0\}$ and $I_i=\{ (x,y) \in \partial \triangle_i^+: y=0\}$.  Let $\rho:\overline{\triangle^+_i} \rightarrow \overline{\triangle_i}$  be the conformal map which fixes $(1,0)$, $(-1,0)$ and $(0,1)$ and set $\phi''_i=\phi'_i \circ \rho$.  For simplicity of exposition, we will prove that $u_i=\alpha_i \circ \phi''_i$ is real analytic on $\triangle_i^+ \cup I_i$.  This proves $\alpha_i \circ \phi'_i=u_i \circ \rho^{-1}$ is real analytic since $\rho^{-1}$ is real analytic on $\triangle_i \cup A_i$.

 Let $^ig$ be the pull-back metric via $\phi''_i:\overline{\triangle_i^+} \rightarrow \overline{\triangle_i}$ of the standard Euclidean metric $g_0$ defined on $\overline{\triangle_i}$.  If we write $^ig=(g_{\alpha \beta})$ in terms of the Euclidean coordinates on $\triangle^+_i \subset {\bf R}^2$( that is $x^1 := x, x^2 := y$,)  then $^ig^{\alpha \beta}$ is real analytic in $\triangle_i^+$ and $C^{1,\nu}$ in $\triangle_i^+ \cup I$.  
Let 
\[
{\cal H}:=\{h=(h_1,h_2,h_3)| h_i:\triangle^+_i \rightarrow {\bf R}^n \in W^{1,2}\},
\]
and consider the functional ${\cal E}:{\cal H} \rightarrow R$ defined by setting
\[
{\cal E}(h)= \sum_{i=1}^3 k_i \int_{\triangle_i^+} \  ^ig^{\alpha \beta} \frac{\partial h_i}{\partial x^{\alpha}} \cdot \frac{\partial h_i}{\partial x^{\beta}} dx^1dx^2.
\]
Set $u=(u_1,u_2,u_3) \in {\cal H}$.  Note that  $\phi''_i$ is an isometry of $(\triangle_i^+, \ ^ig)$ and $(\triangle_i, g_0)$,  and thus
\[
{\cal E}(u)=E^k(\alpha).
\]
This in turn implies that $u$ is the minimizer for ${\cal E}(\cdot)$ with respect to the elements $h \in {\cal H}$  with the same boundary condition as $u$.  By result of \cite{mese} and \cite{daskal-mese1}, $u_i$ is Lipschitz continuous in $\triangle_i^+ \cup I_i$.   

We now claim  $C^{1,\beta}$ regularity of $u_i$ in $\triangle_i^+ \cup I_i$ for $\beta \leq \frac{1}{2}$.  For simplicity, we will use the following conventions.  Let $f=(f_1,f_2,f_3)$ be a set of functions $f_i: \triangle_i^+ \cup I_i \rightarrow {\bf R}^n$.  We set  $\frac{\partial f}{\partial x}=\left( \frac{\partial f_1}{\partial x},\frac{\partial f_2}{\partial x},\frac{\partial f_3}{\partial x}\right)$ and $\frac{\partial f}{\partial y}=\left( \frac{\partial f_1}{\partial y},\frac{\partial f_2}{\partial y},\frac{\partial f_3}{\partial y}\right)$ if $\frac{\partial f_i}{\partial x}, \frac{\partial f_i}{\partial x}$ exist for $i=1,2,3$.   We say  $f$ satisfies the matching condition if
\begin{equation} \label{mf}
f_i(x,0)=f_j(x,0) \mbox{ for } \ i,j=1,2,3
\end{equation}
and balancing condition if
\begin{equation} \label{bf}
\sum_{i=1}^3 \frac{\partial f_i}{\partial y} =0.
\end{equation}
Now assume $f$ satisfies the matching condition.  We write $f \in C^{\infty}$ if $f_i \in C^{\infty}(\triangle_i^+ \cup I_i)$ for $i=1,2,3$.  We write $f \in L^2$ ($L^2_{loc}$, $W^{1,2}$, $W^{1,2}_{loc}$ resp.)  if $f_i \in L^2(\triangle_i^+)$ ($L^2_{loc}(\triangle_i^+)$, $W^{1,2}_{loc}(\triangle_i^+)$, $W^{1,2}(\triangle_i^+)$ resp.)  for $i=1,2,3$.  Furthermore, we set $||f||_{L^2}=\sum_{i=1}^3 ||f_i||_{L^2}$ with other Sobolev norms are defined analogously.  We say $f$ has compact support if $f_i$ has compact support in $\triangle_i^+ \cup I_i$.  We will also set
 \[
Df=\Big( \max \left\{\left| \frac{\partial f_1}{\partial x} \right|, \left| \frac{\partial f_1}{\partial  y} \right| \right\}, \max \left\{\left| \frac{\partial f_2}{\partial x} \right|, \left| \frac{\partial f_2}{\partial  y} \right| \right\}, \max \left\{\left| \frac{\partial f_3}{\partial x} \right|, \left| \frac{\partial f_3}{\partial  y} \right|  \right\} \Big)
\]
and
\[
\delta_h f=\frac{f(x+h,y)-f(x,y)}{h}.
\]
\\
\\
{\bf Claim.}  Define a functional ${\cal O}$ by setting
\begin{equation} \label{start}
{\cal O}(\eta)= \sum_{i=1}^3 \int_{\triangle_i^+} k_i \ ^ig^{\alpha \beta} \frac{\partial f_i}{\partial x^{\alpha}} \frac{\partial \eta_i}{\partial x^{\beta}} dxdy
\end{equation}
for any $\eta \in W^{1,2}$ with compact support.  If $\zeta \in C^{\infty}$ with compact support, $0 \leq \zeta_i\leq 1$, then 
\begin{equation} \label{hh}
||\zeta D (\delta_h f) ||^2_{L^2}  \leq C \left( ||f||_{W^{1,2}}+{\cal O}(\delta_{-h}( \zeta^2 \delta_h f)) \right)
\end{equation}
where  $C$ is a constant depending on $||^ig^{\alpha \beta} ||_{C^{1,\alpha}}$, $k$ and $\zeta$.    \\
\\

\begin{proofclaim}
In this proof, we use $C$ to be a generic constant only dependent on $||^ig^{\alpha \beta}||_{C^{1,\alpha}}$, $k$ and $\zeta$.  We first replace $\eta_i$ by
$
\delta_{-h} \eta_i
$ 
in (\ref{start}).  Since
\[
\delta_h  \left(  \ ^ig^{\alpha \beta} \frac{\partial f_i}{\partial x^{\alpha}} \right) (x,y) =  \ ^ig^{\alpha \beta} (x+h,y) \left( \left( \delta_h \frac{\partial f_i}{\partial x^{\alpha}} \right)  (x,y) \right) + \left( \left( \delta_h \  ^ig^{\alpha \beta} \right) (x) \right) \frac{\partial f_i}{\partial x^{\alpha}}(x,y),
\]
we obtain
\begin{equation} \label{hs}
\sum_{i=1}^3 \int k_i  \ ^ig^{\alpha \beta}(x+h,y) \left( \delta_h \frac{\partial f_i}{\partial x^{\alpha}} \right) \frac{\partial \eta_i}{\partial x^{\beta}}   = -\sum_{i=1}^3 \int k_i(\delta_h \  ^ig^{\alpha \beta}) \frac{\partial f_i}{\partial x^{\alpha}} \frac{\partial \eta_i}{\partial x^{\beta}} +{\cal O}( \delta_{-h} \eta).
\end{equation}
Now replace $\eta_i$ with $\zeta^2 \delta_h f_i$.  If we let RHS and LHS be the right hand side and left hand side of (\ref{hs}), we get
\begin{eqnarray}
\mbox{RHS} & = & -\sum_{i=1}^3 \int k_i(\delta_h \  ^ig^{\alpha \beta}) \frac{\partial f_i}{\partial x^{\alpha}} \left( 2 \zeta \frac{\partial \zeta}{\partial x^{\beta}} \delta_h f + \zeta^2 \frac{\partial}{\partial x^{\beta} }(\delta_h f)  \right) + {\cal O}(\delta_{-h}( \zeta^2 \delta_h f)) 
 \nonumber \\
& \leq & C ||f||_{W^{1,2}} \left( || \delta_h f D\zeta||_{L^2} + ||\zeta (D \delta_h f)||_{L^2} \right) + {\cal O}(\delta_{-h}( \zeta^2 \delta_h f)) 
 \label{rhs}
\end{eqnarray}
  and 
\[
\mbox{LHS} = \sum_{i=1}^3 \int k_i \  ^ig^{\alpha \beta}(x+h,y) \left(\delta_h \frac{\partial f_i}{\partial x^{\alpha}} \right) \left( \delta_h \frac{\partial f_i}{\partial x^{\beta}} \right) \zeta^2 + 2 \int k_i \  ^ig^{\alpha \beta}(x+y,h) \left( \delta_h \frac{\partial f_i}{\partial x^{\alpha}} \right)  \zeta \frac{\partial \zeta}{\partial x^{\beta}} \delta_h f.
\]
The last equality implies that
\begin{equation} \label{lhs'}
\sum_{i=1}^3 \int k_i \  ^ig^{\alpha \beta}(x+h,y) \left(\delta_h \frac{\partial f_i}{\partial x^{\alpha}} \right) \left( \delta_h \frac{\partial f_i}{\partial x^{\beta}} \right) \zeta^2 \leq \mbox{LHS} + C||(D\delta_h f)\zeta||_{L^2} ||D\zeta (\delta_h f)||_{L^2}.
\end{equation}
Therefore, (\ref{hs}), (\ref{rhs}) and (\ref{lhs'}) combine to give
\begin{eqnarray}
\lefteqn{||\zeta (D\delta_h f) ||_{L^2}^2} \nonumber \\
 & \leq & C (\mbox{RHS} +  C ||(D\delta_h f)\zeta||_{L^2} ||D\zeta (\delta_h f)||_{L^2} ) \nonumber \\
& \leq & C \Big( \left. ||f||_{W^{1,2}} ( || \delta_h f D\zeta||_{L^2} + ||\zeta (D \delta_h f)||_{L^2} )  \right. \nonumber \\
& & \left. + {\cal O}(\delta_{-h}( \zeta^2 \delta_h f)) + C||(D\delta_h f)\zeta||_{L^2} ||D\zeta (\delta_h f)||_{L^2}  \Big)\right. \label{gg}.
 \end{eqnarray}
Applying the inequality
$
||\delta_h f||_{L^2{(\Omega')}} \leq ||Df||_{L^2}
$
for any $\Omega' \subset \subset X_{{\rm Id}}$  (cf. Lemma 7.23 \cite{gilbarg-trudinger}) and the Cauchy-Schwarz inequality $2ab \leq \epsilon a^2+\frac{1}{\epsilon} b^2$ several times in  (\ref{gg}),
we obtain
\[
||\zeta D(\delta_h f)||^2_{L^2} \leq C (||f||_{W^{1,2}}^2+  {\cal O}(\delta_{-h}( \zeta^2 \delta_h f)) ).
\]
\end{proofclaim}

Denote the component functions of $u_i$ as $u_i^m$.  Since the map $u$ is ${\cal E}$-minimizing,  each coordinate function $u_i^m$ is also minimizing and the first variation formula gives 

\begin{equation} \label{fv}
0= \sum_{i=1}^3 k_i \int_{\triangle_i}  \  ^ig^{\alpha \beta} \frac{\partial u_i^m}{\partial x^{\alpha}}  \frac{\partial \eta_i}{\partial x^{\beta}} dx^1dx^2
\end{equation}
for any $\eta\in W^{1,2}$ with compact support.  
For any $\Omega_0 \subset \subset X_{{\rm Id}}$, we can then apply
 Claim with $\zeta \equiv 1$ in $\Omega_0$, $f_i=u_i^m$ and ${\cal O}$ equal to 0 to  conclude that
 $\frac{\partial^2 u^m}{(\partial x)^2}, \frac{\partial^2 u^m}{\partial x \partial y} \in L^2(\Omega_0)$, i.e. $\frac{\partial u^m}{\partial x} \in W^{1,2}_{loc}$.  
 
 Now we choose $\eta \in W^{2,2}$ map with compact support with compact support in $\Omega \subset \subset X_{{\rm Id}}$.  Since $\frac{\partial u^m}{\partial x} \in W^{1,2}(\Omega)$, we can integrate the pointwise equality
 \[
 \frac{\partial}{\partial x} \left( \ ^ig^{\alpha \beta} \frac{\partial u_i^m}{\partial x^{\alpha}} \frac{\partial \eta_i}{\partial x^{\beta}}\right)   
=  \left(  \frac{\partial}{\partial x} \ ^ig^{\alpha \beta} \right)\frac{\partial u_i^m}{\partial x^{\alpha}} \frac{\partial \eta_i}{\partial x^{\beta}}
+\ ^ig^{\alpha \beta} \frac{\partial }{\partial x^{\alpha}} \left( \frac{\partial u_i^m}{\partial x} \right) \frac{\partial \eta_i}{\partial x^{\beta}}
+ \ ^ig^{\alpha \beta} \frac{\partial u_i^m}{\partial x^{\alpha}} \frac{\partial}{\partial x^{\beta}} \left(\frac{\partial \eta_i}{\partial x}\right).
\]
Note that   the left hand side of the resulting integral equality is equal to 0 since $x \mapsto \eta_i(x,y)$ has compact support for all $y$.  Furthermore, the third term on the right hand side of the resulting integral equality is equals 0 by (\ref{fv}) since $\frac{\partial \eta}{\partial x} \in W^{1,2}$ with compact support.  Thus, we obtain
 \[
-  \sum_{i=1}^3 \int k_i  \frac{\partial}{\partial x} \ ^ig^{\alpha \beta} \frac{\partial u_i^m}{\partial x^{\alpha}} \frac{\partial \eta_i}{\partial x^{\beta}} =
\sum_{i=1}^3 \int k_i \ ^ig^{\alpha \beta} \frac{\partial }{\partial x^{\alpha}} \left( \frac{\partial u_i^m}{\partial x} \right) \frac{\partial \eta_i}{\partial x^{\beta}}.
\]
Let $\Omega' \subset \subset \Omega$ and let $\zeta \in C^{\infty}$ with $0 \leq \zeta_i \leq 1$, $\zeta \equiv 1$ in $\Omega'$ and compact support in $\Omega$.  We  apply the claim with $f_i=\frac{\partial u_i^m}{\partial x}$ and
 ${\cal O}(\eta)$ equal to the left hand side of the equality above to obtain
\[
||\zeta D(\delta_h \frac{\partial u^m}{\partial x})||^2_{L^2} \leq C \left((||\frac{\partial u^m}{\partial x}||^2_{W^{1,2}(\Omega)}+ {\cal O}(\eta) \right).
\]
Substituting  $\eta_i$ with $\delta_{-h}(\zeta^2 \delta_h \frac{\partial u_i^m}{\partial x})$ in ${\cal O}(\eta)$, we obtain
\begin{eqnarray*}
{\cal O}(\delta_{-h}(\zeta^2 \delta_h \frac{\partial u_i^m}{\partial x})) & = &  -  \sum_{i=1}^3 \int k_i  \frac{\partial}{\partial x} \ ^ig^{\alpha \beta} \frac{\partial u_i^m}{\partial x^{\alpha}} \left( \delta_{-h} \zeta^2 \delta_h  \frac{\partial u_i^m}{\partial x^{\beta}} \right) \\
&  \leq & C||u^m||_{W^{1,2}} \left(||\zeta \delta_{-h} \delta_h \frac{\partial u^m}{\partial x^{\beta}}||_{L^2} + ||D \frac{\partial u^m}{\partial x}||_{L^2(\Omega)} \right) \\
& \leq & C||u^m||_{W^{1,2}} \left(||\zeta D (\delta_h \frac{\partial u^m}{\partial x})||_{L^2} + ||D \frac{\partial u^m}{\partial x}||_{L^2(\Omega)} \right) \\
& \leq &  C\left( \frac{1}{\epsilon} ||u^m||^2_{W^{1,2}} + \frac{\epsilon}{2} ||\zeta D (\delta_h \frac{\partial u^m}{\partial x})||^2_{L^2} + \frac{\epsilon}{2} ||D \frac{\partial u^m}{\partial x}||^2_{L^2(\Omega)} \right) .
\end{eqnarray*}
The above two inequalities combine to show 
\[
||\zeta D(\delta_h \frac{\partial u^m}{\partial x})||^2_{L^2} \leq C ||\frac{\partial u^m}{\partial x}||^2_{W^{1,2}(\Omega)}
\]
which in turn implies
\[
||D(\frac{\partial^2 u^m}{(\partial x)^2})||^2_{L^2(\Omega')} \leq C ||\frac{\partial u^m}{\partial x}||^2_{W^{1,2}(\Omega)}.
\]
By the $W^{1,2}$-trace theory,  $\frac{\partial^2 u_i^m}{(\partial x)^2} \in W^{1,2}_{loc}(\triangle_i^+ \cup I_i)$  implies that $\frac{\partial^2 u_i^m}{(\partial x)^2} \in L^2_{loc}(I_i)$.  
  Therefore, for $(a,0), (b,0) \in I_i$, 
\[
\left| \frac{\partial u^m_i}{\partial x}(b,0)-\frac{\partial
u^m_i}{\partial x}(a,0) \right| = \int_a^b \frac{\partial^2 u^m_i}{\partial
x^2} (x,0) dx \leq  (b-a)^{1/p}  \left( \int_{I_i} \left| \frac{\partial^2
u^m_i}{\partial x^2}\right|^{q} \right)^{1/q},
\]
for any $q \leq
2$ (and hence any $p \geq 2$.)  This implies that $u^m_i$ restricted
to $A$ is $C^{1,\beta}$ for any $\beta \leq 1/2$. Thus, by the
boundary regularity of harmonic maps, we have that $u^m_i\in C^{1,\beta}(\triangle_i^+ \cup I_i)$.

We will now assume, for the moment, that $u^m_i \in C^2(\triangle_i^+ \cup I_i)$  and prove that $u^m_i$ is real analytic in $\triangle_i^+ \cup I_i$.  The minimizing property of $u_i^m$ says that $u_i^m$ is a smooth harmonic function in $\triangle_i^+$ (with respect to the metric $^ig$) and satisfies the harmonic map equation
\[
0=\frac{\partial }{\partial x^{\beta} } \left( \ ^ig^{\alpha \beta} \frac{\partial u_i^m}{\partial x^{\alpha}}  \right).
\]
Multiplying the above equality by $\zeta =(\zeta_1,\zeta_2,\zeta_3) \in C^{\infty}$ with compact support and integrating, we obtain
\[
0= \sum_{i=1}^3  \int_{\triangle_i^+} k_i \frac{\partial }{\partial x^{\beta} } \left( \ ^ig^{\alpha \beta} \frac{\partial u_i^m}{\partial x^{\alpha}}  \right) \zeta_i
 = \sum_{i=1}^3 k_i \int_{A_i} \  ^ig^{\alpha 2} \frac{\partial u_i^m}{\partial x^{\alpha}} \zeta_i dx.
\]
Since $u^m_i \in C^{1,\alpha}(A_i)$, we obtain  the following linear system satisfied by $3n$ unknown variables $u_i^m$:
\begin{eqnarray*}
(E_i^m)  &  \displaystyle{  \frac{\partial}{\partial x^{\alpha}}
\left( \ ^ig^{\alpha \beta} \frac{\partial u_i^m}{\partial
x^{\beta}}\right) =0 } &   \mbox{ for }  i=1, 2, 3, \,\,\,\, m=1,...,n \\
(B_i^m) &  u_i^m-u_1^m=0 &   \mbox{ for } i=2,3, \,\,\,\, m=1,...,n \\
  (C_1^m) &   \displaystyle{ \sum_{i=1}^3 k_i  \, {}^ig^{\alpha 2} \frac{\partial u_i^m}{\partial x^{\alpha}} =0} & \mbox{ for } m=1,...,n.
  \end{eqnarray*}
  We wish to assign a weight the equations, unknowns and the boundary condition so that
  the system given above is elliptic and coercive as in [KNS].  We choose a weight
  $s_i^m=0$ to each equation $(E_i^m)$, $t_i^m=2$ to each unknown variable $u_i^m$,
  $r_i^m=-2$ $(i=2, 3)$ to each boundary condition $(B_i^m)$ and $r_1^m=-1$ to
  each boundary condition $(C_1^m)$.  The principle part of $(E_i^m)$,
  (resp. $(B_i^m)$, $(C_1^m))$ is the part of exactly order $2=s_i^m+t_i^m$
  (resp. $0=r_i^m+t_i^m$ for $i=2, 3$, $1=r_i^m+t_1^m$).
Thus, we get the principle part of the linear system corresponding to these choices of weights as:
 \begin{eqnarray*}
(\tilde{E}_i^m) &   \displaystyle{ {}^ig^{\alpha \beta} \frac{\partial^2 u_i^m}{\partial x^{\alpha} \partial x^{\beta}} }
 =0 & \mbox{ for } i=1, 2, 3, \,\,\,\, m=1,...,n \\
(\tilde{B}_i^m) &    u_i^m-u_1^m=0 &  \mbox{ for } i=2,3, \,\,\,\, m=1,...,n \\
  (\tilde{C}_1^m) &  \displaystyle{ \sum_{i=1}^3 k_i \, {}^ig^{2 \beta} \frac{\partial u_i^m}{\partial x^{\beta}} =0,} &  \mbox{ for } m=1,...,n.
  \end{eqnarray*}

To show our system is elliptic, it is enough to show that any solution of
$(\tilde{E}_i^m)$ of the form
\[
u_i^m=c_i^m e^{\sqrt{-1} (x\xi +y \eta)}, \ \mbox{ with } 0 \neq (\xi,\eta) \in {\bf R}^2
\]
is trivial.  From
\[
^ig^{\alpha \beta}\frac{\partial^2 u_i^m}{\partial x^{\alpha} \partial x^{\beta}}=(^ig^{11} c_i^m \xi^2 + 2\ ^ig^{12} \sqrt{-1} c_i^m \xi \eta + \ ^ig^{22} c_i^m \eta^2) e^{\sqrt{-1}(x\xi+y\eta)} =0
\]
we obtain
\[
\left\{
\begin{array}{l}
^ig^{11} c_i^m \xi^2 +\ ^ig^{22} c_i^m \eta^2 =0\\
2\  ^ig^{12} c_i^m \xi \eta =0.
\end{array}
\right.
\]
Since $(\xi,\eta) \neq 0$, this implies $c_i^m=0$.  Hence $(\tilde{E}_i^m)$ is elliptic.

To show our system is coercive, we need to show that any bounded solution $(\tilde{E}_i^m)$ of the form
\[
u_i^m=e^{\sqrt{-1} \xi x} \varphi_i^m(y), \mbox{ with } 0 \neq \xi \in {\bf R}
\]
satisfying $(\tilde{B}_i^m)$ is trivial. From $(\tilde{E}_i^m)$ we
obtain,
\[
^ig^{\alpha \beta}\frac{\partial^2 u_i^m}{\partial x^{\alpha} \partial x^{\beta}}=(- \ ^ig^{11}
\xi^2 \varphi_i^m + 2 \ ^ig^{12}\sqrt{-1}  \xi \frac{d \varphi_i^m}{dt}
+ \ ^ig^{22} \frac{d^2 \varphi_i^m}{dt^2} ) e^{\sqrt{-1} x\xi} =0
\]
and hence $\varphi_i^m=c_i^m e^{(a_i^m+\sqrt{-1}b_i^m)y}$ where
$z=a_i^m+\sqrt{-1}b_i^m$ is a root of the characteristic equation
\[
^ig^{22}z^2 + 2 \ ^ig^{12}\sqrt{-1}\xi z- \ ^ig^{11}\xi^2 =0.
\]
Since $u_i^m$ is bounded, $a_i^m\leq 0$ for all $i=1, 2, 3$ and $m=1,...,n$.  The boundary condition $(\tilde{B}_i^m)$ implies
\[
c_i^m e^{\sqrt{-1}\xi x} =c_1^m e^{\sqrt{-1}\xi x}
\]
and hence $C:=c_1=...=c_i$.  The other boundary condition
$(\tilde{B}_1^m)$ implies
\[
C \sum_{k=1}^m a_i^m e^{\sqrt{-1}\xi x}=0.
\]
Thus, the condition that $a_i^m\leq 0$ implies that $C=0$.  This
shows $u_i^m \equiv 0$.

Now that we have shown our system to be elliptic and coercive with
the chosen weights $s_i^m=0$, $t_i^m=0$ and $r_i^m=-2$ ($k\neq 1$)
and $r_1^m=-1$, we can apply the elliptic regularity theorem of \cite{KNS} (see also \cite{ADN2} and \cite{Mo}).  This implies that $u_i^m \in
C^{\omega}(\triangle_i^+ \cup I_i)$. 

 We are now left with proving that $u_i^m \in C^{1,\beta}$ implies $u_i^m \in C^2$.  Indeed,  the assertion follows from writing down the system $(E^m_i)$ in divergence form then applying the Schauder estimate for a system in \cite{ADN2} combined with a standard difference quotient argument (cf. \cite{KNS} Lemma 5.1 and the remark following it).
 \end{proof}

In order to prove higher regularity of an $l^2$-energy minimizer, it is now sufficient to show that an $l^2$-minimizers is an $l^1$-minimizer.  We first need the following convexity and uniqueness statement for both the $l^1$- and $l^2$-energy
minimizers.  

\begin{lemma} \label{uniqueness}
Given two $W^{1,2}$ maps $\alpha^{(0)}, \alpha^{(1)}:X_{\Phi}
\rightarrow {\bf R}^n$ in ${\cal F}_{\Psi}(\Phi)$, define
\[
\alpha^{(t)}=(\alpha^{(t)}_1,\dots,\alpha^{(t)}_n):X_{\Phi} \rightarrow {\bf R}^n
 \] by setting
$\alpha^{(t)}_i:=(1-t) \alpha^{(0)}_i + t\alpha^{(1)}_i$. Then
\begin{equation} \label{convexitystatement}
 E^k(\alpha^{(t)}) \leq (1-t)
E^k(\alpha^{(0)})+tE^k(\alpha^{(1)})-t(1-t)\sum_{i=1}^n k_i \int
|\alpha^{(0)}_i-\alpha^{(1)}_i|^2 dxdy.
\end{equation}
If a continuous $W^{1,2}$ map $\alpha:X_{\Phi} \rightarrow {\bf
R}^n$ is $k$-weighted $l^1$-energy minimizing in ${\cal F}_{\Psi}(\Phi)$, then $\alpha$ is the unique $k$-weighted $l^1$-energy
minimizing map.  Similarly, if $\alpha$ is a $c$-weighted $l^2$-energy minimizing in ${\cal F}_{\Psi}(\Phi)$, then $\alpha$ is the unique $c$-weighted $l^2$-energy minimizing map.
\end{lemma}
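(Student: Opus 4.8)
The plan is to establish the convexity inequality (\ref{convexitystatement}) by a direct pointwise computation on each $\triangle_i^+$ and then deduce uniqueness by a standard convexity argument. First I would expand $E(\alpha_i^{(t)}) = \int_{\triangle_i} |\nabla \alpha_i^{(t)}|^2\, dxdy$ with $\nabla \alpha_i^{(t)} = (1-t)\nabla \alpha_i^{(0)} + t \nabla \alpha_i^{(1)}$, using the elementary identity
\[
|(1-t)a + tb|^2 = (1-t)|a|^2 + t|b|^2 - t(1-t)|a-b|^2
\]
valid for vectors $a,b$ in any inner product space. Applying this with $a = \nabla \alpha_i^{(0)}$ and $b = \nabla \alpha_i^{(1)}$ and integrating gives
\[
E(\alpha_i^{(t)}) = (1-t) E(\alpha_i^{(0)}) + t E(\alpha_i^{(1)}) - t(1-t) \int_{\triangle_i} |\nabla \alpha_i^{(0)} - \nabla \alpha_i^{(1)}|^2\, dxdy.
\]
Multiplying by $k_i > 0$ and summing over $i=1,2,3$ yields (\ref{convexitystatement}); in fact one gets equality rather than just inequality, with the last term being the full Dirichlet energy of the difference $\alpha_i^{(0)} - \alpha_i^{(1)}$ (the statement writes $|\alpha_i^{(0)} - \alpha_i^{(1)}|^2$ in the integrand, which I read as $|\nabla(\alpha_i^{(0)} - \alpha_i^{(1)})|^2$, i.e. $E(\alpha_i^{(0)} - \alpha_i^{(1)})$). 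One must check that $\alpha^{(t)} \in {\cal F}_\Psi(\Phi)$: this is immediate since the matching and boundary conditions are affine in $\alpha$, so a convex combination of two admissible maps is admissible.

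For the uniqueness of the $l^1$-minimizer, suppose $\alpha = \alpha^{(0)}$ is $k$-weighted $l^1$-energy minimizing and $\alpha^{(1)}$ is another minimizer, so $E^k(\alpha^{(0)}) = E^k(\alpha^{(1)}) = \inf$. Plugging into (\ref{convexitystatement}) at, say, $t = 1/2$ gives
\[
E^k(\alpha^{(1/2)}) \leq \inf - \tfrac14 \sum_{i=1}^3 k_i \int |\nabla(\alpha_i^{(0)} - \alpha_i^{(1)})|^2\, dxdy.
\]
Since $\alpha^{(1/2)} \in {\cal F}_\Psi(\Phi)$ we have $E^k(\alpha^{(1/2)}) \geq \inf$, forcing $\sum_i k_i \int |\nabla(\alpha_i^{(0)} - \alpha_i^{(1)})|^2 = 0$. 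As each $k_i > 0$, this means $\nabla(\alpha_i^{(0)} - \alpha_i^{(1)}) \equiv 0$ on each $\triangle_i$, so $\alpha_i^{(0)} - \alpha_i^{(1)}$ is constant on each disk; the common boundary condition $\alpha_i \circ \phi_i|_{B_i} = \psi_i$ then pins that constant to zero, giving $\alpha^{(0)} = \alpha^{(1)}$.

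For the $l^2$-minimizer I would argue similarly but need to handle the nonlinearity $E_c(\alpha) = (\sum_c \frac{1}{c_i} E(\alpha_i)^2)^{1/2}$. The key observation is that $x \mapsto x^2$ is strictly convex and $E(\alpha_i)$ itself is convex along the segment $\alpha^{(t)}$ (indeed by the computation above $E(\alpha_i^{(t)}) \leq (1-t)E(\alpha_i^{(0)}) + tE(\alpha_i^{(1)})$ with strict inequality unless $\nabla \alpha_i^{(0)} = \nabla \alpha_i^{(1)}$), so the composition $t \mapsto \frac{1}{c_i} E(\alpha_i^{(t)})^2$ is convex, and the finite sum $\sum_c$ and the increasing concave function $y \mapsto y^{1/2}$ preserve... here one must be slightly careful: $y^{1/2}$ is concave, not convex, so composing does not obviously preserve convexity. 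The cleaner route is to note that if both $\alpha^{(0)}$ and $\alpha^{(1)}$ are $l^2$-minimizers with the same value $\mu = E_{c}(\alpha^{(0)}) = E_c(\alpha^{(1)})$, then by the strict convexity of each $E(\alpha_i)$ in the sense above, unless $\nabla\alpha_i^{(0)} = \nabla\alpha_i^{(1)}$ for every $i$ with $c_i \neq 0$ we get $E(\alpha_i^{(1/2)}) < \tfrac12(E(\alpha_i^{(0)}) + E(\alpha_i^{(1)}))$ strictly for that $i$, hence (using monotonicity of $y \mapsto y^2$ and then of $y \mapsto y^{1/2}$, together with the triangle inequality in $\ell^2$ applied to the vectors $(c_i^{-1/2}E(\alpha_i^{(0)}))_i$ and $(c_i^{-1/2}E(\alpha_i^{(1)}))_i$) we obtain $E_c(\alpha^{(1/2)}) < \mu$, contradicting minimality. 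So $\nabla\alpha_i^{(0)} = \nabla\alpha_i^{(1)}$ for all relevant $i$, and as before the boundary condition forces equality of the maps (for indices $i$ with $c_i = 0$, compatibility forces $E(\alpha_i^{(0)}) = E(\alpha_i^{(1)}) = 0$, so $\alpha_i$ is constant and again determined by the boundary condition). The main obstacle I anticipate is precisely this last point: making the $\ell^2$-norm strict convexity argument airtight, i.e. verifying that strict decrease of even a single coordinate $E(\alpha_i)$ strictly decreases $E_c$; this follows because $E_c(\alpha^{(1/2)}) \leq \|\tfrac12 v^{(0)} + \tfrac12 v^{(1)}\|_{\ell^2}$ where $v^{(s)} = (c_i^{-1/2} E(\alpha_i^{(s)}))_{i: c_i \neq 0}$ has the same $\ell^2$-norm $\mu$ for $s=0,1$, the inequality being strict when $v^{(0)} \neq v^{(1)}$, and equality of all $E(\alpha_i)$ combined with $\nabla\alpha_i^{(0)} \neq \nabla\alpha_i^{(1)}$ for some $i$ still produces strict decrease through the first (convexity of $E(\alpha_i)$) step — so one should organize the two estimates in the right order.
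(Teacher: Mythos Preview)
Your argument for the convexity inequality and the $l^1$-uniqueness is essentially the same as the paper's: the paper cites the convexity of the Dirichlet energy (Hartman, Schoen--Yau) rather than writing out the pointwise identity, and records the defect term as $\int |\alpha_i^{(0)}-\alpha_i^{(1)}|^2\,dxdy$; your reading of this as $\int |\nabla(\alpha_i^{(0)}-\alpha_i^{(1)})|^2\,dxdy$ matches the direct computation, and either version suffices for uniqueness once one uses the shared boundary data.

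For the $l^2$-uniqueness the paper's route is considerably shorter than yours. It simply notes that if $\alpha_i^{(0)} \neq \alpha_i^{(1)}$ then the convexity of $t \mapsto E(\alpha_i^{(t)})$ is strict, and then applies Jensen (convexity of $x \mapsto x^2$) to obtain
\[
E(\alpha_i^{(t)})^2 < (1-t)\,E(\alpha_i^{(0)})^2 + t\,E(\alpha_i^{(1)})^2.
\]
Summing with weights $1/c_i$ gives strict convexity of $t \mapsto E_c(\alpha^{(t)})^2$, and since minimizing $E_c$ is the same as minimizing $E_c^2$, uniqueness follows at once. This completely sidesteps your worry about the concave square root: one never works with $E_c$ itself, only with $E_c^2$, so the composition-of-convex-with-concave issue does not arise. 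Your $\ell^2$-triangle-inequality detour can be made to work, but it is unnecessary once you square the functional.
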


\begin{proof}
  The
convexity  of the Dirichlet energy  (see for example,
\cite{hartman} or \cite{SY} Chapter X (2.6ii)),
\begin{equation} \label{cvxty}
 E(\alpha^{(t)}_i) \leq (1-t)
E(\alpha^{(0)}_i)+tE(\alpha^{(1)}_i)-t(1-t)\int
|\alpha^{(0)}_i-\alpha^{(1)}_i|^2 dxdy
\end{equation}
 immediately implies
inequality~(\ref{convexitystatement}). We note here that the $\alpha^{(t)}$'s share the same Dirichlet boundary
condition specified by $\Psi$ and $\Phi$.  Thus, if $\alpha^{(0)}$ and
$\alpha^{(1)}$ are both minimizing, then
\[
\int |\alpha^{(0)}_i-\alpha^{(1)}_i|^2 dxdy=0
\]
for every $i=1, \dots, n$ and  this immediately implies the
uniqueness of $k$-weighted $l^1$-energy minimizer.  Furthermore, if $\alpha_i^{(0)} \neq \alpha_i^{(1)}$, then  Jensen's inequality applied to (\ref{cvxty}) implies
\[
 E(\alpha^{(t)}_i)^2  < (1-t)
E(\alpha^{(0)}_i)^2+tE(\alpha^{(1)}_i)^2
\]
and this implies the uniqueness of the $c$-weighted $l^2$-minimizing map.
\end{proof}

\begin{lemma} \label{k-min/c-min}
If ${\tilde{\alpha}}$ is the $k$-weighted $l^1$-energy minimizing map in ${\cal F}_{\Psi}(\Phi)$,
then it is the $c$-weighted $l^2$-energy minimizing map in ${\cal F}_{\Psi}(\Phi)$ with
\[
c=\left( \frac{E(\tilde{\alpha}_1)}{k_1 \sum_{j=1}^n
(E(\tilde{\alpha}_j)/k_j)}, \dots , \frac{E(\tilde{\alpha}_n)}{k_n
\sum_{j=1}^n (E(\tilde{\alpha}_j)/k_j)} \right).
\]
  If $\tilde{\alpha}$ is the $c$-weighted $l^2$-energy minimizing map in ${\cal F}_{\Psi}(\Phi)$, then $\tilde{\alpha}$ is the $k$-weighted $l^1$-energy minimizing map in ${\cal F}_{\Psi}(\Phi)$ with
\begin{equation} \label{k}
k=\left(\frac{E(\tilde{\alpha}_1)}{x_1}, \dots, \frac{E(\tilde{\alpha_n})}{x_n} \right).
\end{equation}
Here,  $x=(x_1,...,x_n)$ is the solution to the linear system $(C-I)x=0$ where $C$ is a $n \times n$ matrix  with all columns equal to $c$ and $I$ is the identity matrix.
\end{lemma}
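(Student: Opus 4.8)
The plan is to show that, inside the fixed affine class $\mathcal{F}_\Psi(\Phi)$, a map minimizes $E^k$ if and only if it minimizes $E_c$, by verifying that the two first--variation (Euler--Lagrange) conditions coincide under the stated correspondence between the weights $k$ and $c$. Two preliminary reductions: since the boundary values of $\alpha_i$ cover the arc $\Gamma_i$, one has $E(\alpha_i)>0$ for every $\alpha\in\mathcal{F}_\Psi(\Phi)$, so the only $c\in\mathcal{C}$ admitting a compatible map in $\mathcal{F}_\Psi(\Phi)$ are those with all $c_i>0$, which we assume (this is also what makes the formula $k_i=E(\tilde\alpha_i)/x_i$ meaningful); and, since $E_c\ge 0$, minimizing $E_c$ over $\mathcal{F}_\Psi(\Phi)$ amounts to minimizing $E_c^2=\sum_i c_i^{-1}E(\alpha_i)^2$.

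First I would observe that both $E^k(\cdot)$ and $E_c^2(\cdot)$ are smooth and convex along affine lines of $\mathcal{F}_\Psi(\Phi)$. Indeed, for an admissible direction $\beta=(\beta_1,\dots,\beta_n)$ (each $\beta_i\in W^{1,2}(\triangle_i)$ with $\beta_i\circ\phi_i=0$ on $B_i$ and $\beta$ satisfying the matching condition on $\cup A_i$), the line $\alpha^{(t)}=\tilde\alpha+t\beta$ stays in $\mathcal{F}_\Psi(\Phi)$ for all $t\in{\bf R}$ (the matching and boundary conditions are affine), each $t\mapsto E(\tilde\alpha_i+t\beta_i)$ is a nonnegative quadratic polynomial with nonnegative leading coefficient, so $t\mapsto E^k(\alpha^{(t)})$ is a convex quadratic and $t\mapsto E_c^2(\alpha^{(t)})$ is smooth and convex; this is exactly the convexity and smoothness of the Dirichlet energy underlying Lemma~\ref{uniqueness} (cf.\ (\ref{cvxty})). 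Hence a map in $\mathcal{F}_\Psi(\Phi)$ minimizes $E^k$ (resp.\ $E_c^2$) if and only if the corresponding first variation vanishes in every admissible direction. Writing $\ell_i(\beta):=\frac{d}{dt}E(\tilde\alpha_i+t\beta_i)\big|_{t=0}$, a bounded linear functional of $\beta$, the two optimality conditions become
\[
\sum_i k_i\,\ell_i(\beta)=0 \qquad\mbox{and}\qquad \sum_i \frac{E(\tilde\alpha_i)}{c_i}\,\ell_i(\beta)=0 \qquad\mbox{for all admissible }\beta .
\]
Therefore $\tilde\alpha$ is a $k$-weighted $l^1$-minimizer precisely when the vector $(k_i)_i$ is proportional to $\big(E(\tilde\alpha_i)/c_i\big)_i$, which is the Euler--Lagrange datum of the $c$-weighted $l^2$-problem; this proportionality is the crux of the equivalence.

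It then remains to match constants. For $l^1\Rightarrow l^2$: given the $l^1$-minimizer $\tilde\alpha$, put $c_i=(E(\tilde\alpha_i)/k_i)\big/\sum_j(E(\tilde\alpha_j)/k_j)$; then $c\in\mathcal{C}$, $\tilde\alpha$ is compatible with $c$, and $E(\tilde\alpha_i)/c_i=k_i\sum_j(E(\tilde\alpha_j)/k_j)$ is proportional to $k_i$, so by the previous paragraph $\tilde\alpha$ is the $c$-weighted $l^2$-minimizer, with $c$ exactly as in the statement. For $l^2\Rightarrow l^1$: writing the matrix $C$ with all columns equal to $c$ as $C=c\,{\bf 1}^{T}$ with ${\bf 1}=(1,\dots,1)$, the system $(C-I)x=0$ reads $x=({\bf 1}^{T}x)\,c$, whose nonzero solutions are exactly the scalar multiples $x=tc$ ($t\ne 0$); then $k_i=E(\tilde\alpha_i)/x_i=t^{-1}E(\tilde\alpha_i)/c_i$ makes $(k_i)_i$ proportional to $\big(E(\tilde\alpha_i)/c_i\big)_i$, and since a positive rescaling of $k$ does not change the $l^1$-minimizer, $\tilde\alpha$ is the $k$-weighted $l^1$-minimizer.

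The step I expect to require the most care is the first--order characterization of minimizers used above, namely ``minimizer $\Leftrightarrow$ vanishing first variation''. The non-smoothness of $E_c$ is circumvented by passing to $E_c^2$; after that, convexity along lines (which promotes local to global minimality along each line, hence against every competitor $\alpha'=\tilde\alpha+\beta$) and differentiability along lines (which gives necessity of the Euler--Lagrange equation at an interior minimum) are both inherited directly from the Dirichlet energy, i.e.\ from (\ref{cvxty}). A purely formal secondary point is that the admissible test directions $\beta$ form the tangent space of the affine set $\mathcal{F}_\Psi(\Phi)$ and so are the same for both functionals, depending only on $\Phi$ and $\Psi$, not on the weights.
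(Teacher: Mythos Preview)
Your proof is correct and follows a genuinely different route from the paper. The paper proves the first implication by a direct chain of inequalities: with $c$ as stated one computes $E_c(\tilde\alpha)^2=\big(\sum_i k_iE(\tilde\alpha_i)\big)\big(\sum_j E(\tilde\alpha_j)/k_j\big)$, inserts the $l^1$-minimality $\sum_i k_iE(\tilde\alpha_i)\le\sum_i k_iE(\tilde\alpha'_i)$, rewrites, and finishes with Cauchy--Schwarz $\sum_i c_i^{-1}E(\tilde\alpha_i)E(\tilde\alpha'_i)\le E_c(\tilde\alpha)E_c(\tilde\alpha')$ to obtain $E_c(\tilde\alpha)\le E_c(\tilde\alpha')$. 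For the converse it does not argue variationally at all but invokes uniqueness: with $k$ as in (\ref{k}), the $k$-weighted $l^1$-minimizer is, by the first part, also a $c$-weighted $l^2$-minimizer, hence equals $\tilde\alpha$ by Lemma~\ref{uniqueness}. Your approach replaces both steps by the single mechanism of matching first-order conditions under convexity along lines, which treats the two directions symmetrically and makes transparent \emph{why} the particular formulas for $c$ and $k$ arise (they are exactly what renders $(k_i)$ and $(E(\tilde\alpha_i)/c_i)$ proportional). The paper's route, in exchange, needs no differentiation and no separate check that $t\mapsto E_c^2(\alpha^{(t)})$ is convex---its forward direction is a self-contained algebraic computation, and its backward direction leverages the already-proved uniqueness lemma.
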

\begin{proof}
To see that $\tilde{\alpha}$ is $c$-harmonic, let
$\tilde{\alpha}'$ be a competitor of $\tilde{\alpha}$. We are
assuming that
\begin{equation} \label{minimizing}
\sum_{i=1}^n k_i E(\tilde{\alpha}_i) \leq \sum_{i=1}^n
k_iE(\tilde{\alpha}'_i).
\end{equation}
Use (\ref{minimizing}) to show that
\begin{eqnarray}
E_c(\tilde{\alpha})^2& = & \sum_{i=1}^n \frac{1}{c_i}
E(\tilde{\alpha}_i)^2
 \nonumber \\
& = &  \sum_{i=1}^n \frac{1}{ E(\tilde{\alpha}_i) /(k_i
\sum_{j=1}^n
(E(\tilde{\alpha}_j)/k_j)) } E(\tilde{\alpha}_i)^2  \nonumber \\
& = & \sum_{i=1}^n k_i E(\tilde{\alpha}_i) \sum_{j=1}^n
\frac{1}{k_j} E(\tilde{\alpha}_j)
 \label{pp} \\
 & \leq & \sum_{i=1}^n k_i E(\tilde{\alpha}'_i) \sum_{j=1}^n
 \frac{1}{k_j}
E(\tilde{\alpha}_j) \nonumber \\
& \leq & \sum_{i=1}^n \frac{1}{1/\sum_{j=1}^n
(E(\tilde{\alpha}_j)/k_j)} k_i
E(\tilde{\alpha}'_i) \nonumber \\
& \leq & \sum_{i=1}^n \frac{1}{E(\tilde{\alpha}_i) /(k_i
\sum_{j=1}^n (E(\tilde{\alpha}_j)/k_j))}
E(\tilde{\alpha}_i) E(\tilde{\alpha}'_i) \nonumber \\
& \leq & \sum_{i=1}^n \frac{1}{c_i} E(\tilde{\alpha})
E(\tilde{\alpha}'_j)
\nonumber \\
& \leq & \left( \sum_{i=1}^n \frac{1}{c_i}E(\tilde{\alpha}_i)^2
\right)^{1/2} \left( \sum_{i=1}^n
\frac{1}{c_i}E(\tilde{\alpha}'_i)^2 \right)^{1/2} \nonumber \\
& = & E_c(\tilde{\alpha}) \cdot E_c(\tilde{\alpha}'). \nonumber
\end{eqnarray}
Thus, we have shown
 $E_c(\tilde{\alpha}) \leq E_c(\tilde{\alpha}')$ and this proves $\tilde{\alpha}$ is also $c$-weighted $l^2$-energy minimizing.

 To prove the last statement of the lemma, let $\tilde{\alpha}$ be the   $c$-weighted $l^2$-energy minimizing, $k$ as in (\ref{k}) and  $\tilde{\alpha}'$ the unique $k$-weighted $l^1$-energy minimizing map.  Since
 \[
 (C-I)x=0 \Leftrightarrow c_i \sum_{i=1}^n \frac{E(\tilde{\alpha}_j)}{k_j} =\frac{E(\tilde{\alpha}_i)}{k_i} \Leftrightarrow c_i = \frac{E(\tilde{\alpha}_i)}{k_i \sum_{j=1}^n (E(\tilde{\alpha}_j)/k_j)} ,
 \]
the previous paragraph implies that  $\tilde{\alpha}'$ is also the  $c$-weighted $l^2$-energy minimizer.  The uniqueness of $c$-weighted $l^2$-energy minimizer implies that $\tilde{\alpha}=\tilde{\alpha}'$ and hence $\tilde{\alpha}$ is also the $k$-energy $l^2$-energy minimizer.
\end{proof}

\begin{theorem} \label{creg}
Let $c \in {\cal C}$ and  $\Phi \in {\cal F}$ be so that $\Phi \in C^{2,\mu}$ and non-singular (i.e $\frac{\partial \phi_i}{\partial \theta} \neq 0)$. If  $\alpha \in {\cal F}_{\Psi}(\Phi)$  is a $l_2$-energy minimizer with weight $c$, then  there exists an extension $\Phi' \in C^{2,\nu}$ of $\Phi$ for any $0 < \nu<\mu$ so that $\alpha_i \circ \phi'_i \in C^{\omega}(\triangle_i \cup A_i)$.     In particular, $\alpha_i \circ \phi_i$ is a real analytic function on $A_i$.  Moreover, $\alpha_i \in C^{1,\gamma}(\triangle_i \cup A_i)$ for any $0<\gamma<1$.  
\end{theorem}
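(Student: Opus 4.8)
The plan is to reduce the first two assertions to Theorem~\ref{kreg} by passing from the $l^2$-picture to the $l^1$-picture via Lemma~\ref{k-min/c-min}, and then to extract the additional $C^{1,\gamma}$-regularity of $\alpha_i$ by inverting the reparametrizing diffeomorphism produced inside the proof of Theorem~\ref{kreg}; the role of the non-singularity hypothesis on $\Phi$ will be to keep the Jacobian of that diffeomorphism bounded away from zero up to the boundary.

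\textbf{Reduction to the $l^1$-minimizer.} First I would dispose of degenerate weights: if $c_i=0$ then compatibility forces $E(\alpha_i)=0$, so $\alpha_i$ is constant and all conclusions are immediate for that index; hence we may assume $c_i>0$, and then $E(\alpha_i)>0$, for every $i$. The solution of the linear system $(C-I)x=0$, where $C$ is the $n\times n$ matrix all of whose columns equal $c$, is $x=c$ up to scaling, so Lemma~\ref{k-min/c-min} together with the uniqueness in Lemma~\ref{uniqueness} shows that $\alpha$ is the $k$-weighted $l^1$-energy minimizing map in ${\cal F}_\Psi(\Phi)$ with $k_i=E(\alpha_i)/c_i>0$ (the $l^1$-minimizer is unaffected by the overall scaling of $k$). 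Since $\Phi\in C^{2,\mu}$, Theorem~\ref{kreg} now yields, for each $0<\nu<\mu$, an extension $\Phi'\in C^{2,\nu}$ of $\Phi$ with $\alpha_i\circ\phi'_i$ real analytic on $\triangle_i\cup A_i$ and $\alpha_i\circ\phi_i$ real analytic on $A_i$. These are exactly the first two conclusions of the theorem.

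\textbf{Regularity of $\alpha_i$ itself.} For the last assertion I would argue that the harmonic, quasiconformal diffeomorphism $\phi'_i\in C^{2,\nu}(\overline{\triangle_i})$ produced above has $J_{\phi'_i}$ bounded below by a positive constant on all of $\overline{\triangle_i}$: in the interior this holds because $\phi'_i$ is a diffeomorphism, while on $\partial\triangle_i$ quasiconformality gives, with $K$ the dilatation bound, $J_{\phi'_i}\ge K^{-1}\|D\phi'_i\|^2\ge K^{-1}\big|\tfrac{d\phi_i}{d\theta}\big|^2$, which is positive precisely by the non-singularity of $\Phi$; as $J_{\phi'_i}$ is continuous and everywhere positive on the compact set $\overline{\triangle_i}$, it is bounded below. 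The inverse function theorem — applied in the interior and, at boundary points, in its half-space form permitted by the $C^{2,\nu}$-regularity up to $\partial\triangle_i$ — then gives $(\phi'_i)^{-1}\in C^{2,\nu}(\overline{\triangle_i})$, and $(\phi'_i)^{-1}$ carries $\triangle_i\cup A_i$ onto itself since $\phi_i$ preserves the arc $A_i$. Writing $\alpha_i=(\alpha_i\circ\phi'_i)\circ(\phi'_i)^{-1}$ on $\triangle_i\cup A_i$ exhibits $\alpha_i$ as a real analytic map precomposed with a $C^{2,\nu}$ map, hence $\alpha_i\in C^{2,\nu}(\triangle_i\cup A_i)\subset C^{1,\gamma}(\triangle_i\cup A_i)$ for every $0<\gamma<1$.

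\textbf{Main obstacle.} Apart from the non-degeneracy of $J_{\phi'_i}$ up to the boundary, the argument is bookkeeping: the Lemma~\ref{k-min/c-min} reduction and the H\"older-class composition estimates are routine. The crux — and the only place where the ``non-singular'' hypothesis, absent from Theorem~\ref{kreg}, is genuinely used — is combining the two-sided quasiconformal bound relating $\|D\phi'_i\|^2$ and $J_{\phi'_i}$ with the non-vanishing of the tangential derivative $d\phi_i/d\theta$ in order to conclude that $(\phi'_i)^{-1}$ is $C^{2,\nu}$ all the way up to $\partial\triangle_i$.
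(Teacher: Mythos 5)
Your proposal is correct, and its first half is exactly the paper's argument: the degenerate weights aside, you solve $(C-I)x=0$ to get $x$ proportional to $c$, take $k_i=E(\alpha_i)/c_i$, and pass through Lemma~\ref{k-min/c-min} (with the uniqueness from Lemma~\ref{uniqueness}) to apply Theorem~\ref{kreg}. For the final assertion $\alpha_i\in C^{1,\gamma}$, however, you take a genuinely different route. The paper works one dimension lower: it inverts only the boundary homeomorphism, using $\frac{d\varphi_i}{d\theta}=1/\bigl(\frac{d\phi_i}{d\theta}\circ\varphi_i\bigr)$ to show $\varphi_i=\phi_i^{-1}\in C^{1,1}(\partial\triangle_i)$ (this is where non-singularity enters for the authors), deduces $\alpha_i\big|_{A_i}=(\alpha_i\circ\phi_i)\circ\varphi_i\in C^{1,1}$, and then invokes the boundary regularity theorem for harmonic maps with $C^{1,1}$ Dirichlet data to get $\alpha_i\in C^{1,\gamma}(\triangle_i\cup A_i)$. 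You instead invert the full two-dimensional extension $\phi'_i$ up to $\partial\triangle_i$, converting the nonvanishing of $d\phi_i/d\theta$ into a positive lower bound on $J_{\phi'_i}$ via the quasiconformal inequality $\|D\phi'_i\|^2\le K\,J_{\phi'_i}$ (which does persist to $\overline{\triangle_i}$ by continuity of $D\phi'_i$), and conclude by composing $\alpha_i\circ\phi'_i\in C^{\omega}$ with $(\phi'_i)^{-1}\in C^{2,\nu}$. Both arguments are sound and use non-singularity at the same logical spot; yours avoids the appeal to boundary Schauder theory for the harmonic map $\alpha_i$ and in fact yields the stronger conclusion $\alpha_i\in C^{2,\nu}(\triangle_i\cup A_i)$, at the price of relying on the dilatation bound of the Li--Tam extension rather than only on its $C^{2,\nu}$ regularity. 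One small quibble: in the degenerate case $c_i=0$ a constant $\alpha_i$ cannot satisfy the boundary condition $\alpha_i\circ\phi_i\big|_{B_i}=\psi_i$, so that case is vacuous (every competitor is incompatible with $c$) rather than trivially true for that index; this does not affect your main argument.
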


\begin{proof}
The existence of $\Phi' \in C^{2,\nu}$  so that that $\alpha_i \circ \phi_i' \in C^{\omega}(\triangle_i \cup A_i)$ follows immediately from Theorem~\ref{kreg} and Lemma~\ref{k-min/c-min}.   Furthermore, if we let $\varphi_i =\phi_i^{-1}$, then 
\[
\frac{\partial \varphi_i}{\partial \theta}(\theta)=\frac{1}{\frac{\partial \phi_i}{\partial \theta}(\varphi_i(\theta))}.
\]

Since $\frac{\partial \phi_i}{\partial \theta}(\varphi_i(\theta)) \neq 0$ this implies that  $\varphi_i$ is Lipschitz.  Moreover,
\begin{eqnarray*}
\left| \frac{\partial \varphi_i}{\partial \theta}(\theta_1) - \frac{\partial \varphi_i}{\partial \theta}(\theta_2) \right| & = & \left| \frac{1}{\frac{\partial \phi_i}{\partial \theta}(\varphi_i(\theta_1))} - \frac{1}{\frac{\partial \phi_i}{\partial \theta}(\varphi_i(\theta))} \right| \\
& = &  \frac{1}{\frac{\partial \phi_i}{\partial \theta}(\varphi_i(\theta_1)) \frac{\partial \phi_i}{\partial \theta}(\varphi_i(\theta_2))} \ \left| \frac{\partial \phi_i}{\partial \theta}(\varphi_i(\theta_1))-\frac{\partial \phi_i}{\partial \theta}(\varphi_i(\theta_2)) \right|
\end{eqnarray*}
which immediately shows that $\frac{\partial \varphi_i'}{\partial \theta}$ is Lipschitz and hence $\varphi_i \in C^{1,1}$.  Thus, $\alpha_i \big|_{A_i}= \alpha_i \circ \phi_i \circ \varphi_i \in C^{1,1}$.  Boundary regularity theorem for harmonic maps then implies that $\alpha_i \in C^{1,\gamma}(\triangle_i \cup A_i)$ for any $0<\gamma<1$. 
\end{proof}

\section{Isothermal coordinates}

 Let $\triangle_i^+=\{(x,y) \in \triangle_i:  y>0\}$ and $I_i=\{(x,y) \in \partial \triangle_i: y=0\}$ as in the proof of Theorem~\ref{kreg}.  
Let $Y_0$ be the space which is the union of  three closed half disks $\overline{\triangle^+_1}, \overline{\triangle^+_2}, \overline{\triangle^+_3}$ identified along the $x$-axis $I_1, I_2, I_3$ by the identity map.
We denote the $I_i$'s in $Y_0$ by $I$.   
A map $f=(f_1,f_2,f_3):Y_0 \rightarrow {\bf R}^n$ is said to be conformal if $f_i: \overline{\triangle_i^+} \rightarrow {\bf R}^n$  is conformal (with respect to the standard Euclidean metric on $\triangle_i^+$) and satisfies the matching condition $f_i(x,0)=f_j(x,0)$ for all $-1<x<1$ and $i,j=1,2,3$.

Let ${\rm Id}=({\rm id}_1,{\rm id}_2,{\rm id}_3)$ be the element of ${\cal P}$ so that ${\rm id}_i:\partial \triangle_i \rightarrow \partial \triangle_i$ is the identity map.   With this notation, let $X_{{\rm Id}}$ be as in (\ref{space}) with $\Phi={\rm Id}$.  We denote the $A_i$'s in $X_{{\rm Id}}$ by $A$.   A metric on $X_{{\rm Id}}$ is $g=\{^ig\}$ so that $^ig$ is a metric on $\triangle_i$.  Let $F=(F_1,F_2,F_3) :Y_0 \rightarrow (X_{{\rm Id}}, g)$ be a  map which is injective but not necessarily surjective so that $F_i(\overline{\triangle_i^+})  \subset \overline{\triangle_i}$.   We say that $F$ is an isothermal coordinate system at $p  \in A$  if $F_i(0,0)=p$ , $F_i$ is a conformal map from $\triangle^+_i$ to $(\triangle_i,\ ^ig)$ and $F_i(x,0) =F_j(x,0)$ for $i,j=1,2,3$.  

Let $\Phi^* \in {\cal P}, c^* \in {\cal C}$ and $\alpha^* \in {\cal F}(\Phi^*)$  satisfy the equality (\ref{variationalproblem}) with $\Phi^* \in C^{2,\mu}$ and non-singular.   Then $\alpha^*_i$ is a weakly conformal harmonic map which in $C^{1,\gamma}(\triangle_i \cup A_i)$ by  Theorem~\ref{creg}.   Thus, the differential $d\alpha^*_i$ can be continuously extended to be defined on $\triangle_i \cup A_i$.   
Let $S_i \subset \triangle_i \cup A_i$ be the set of isolated points where the $d\alpha^*_i$ is rank zero.  We define the singular set of $\alpha^*$ as
\[
S=\{ p \in X_{\Phi^*}: p \in S_i \mbox{ for some }i\}.
\]
and the regular set $R$ to be  $X_{\Phi^*} \backslash S$.

Let $p$ be a point
in $R \cap A$.  Then near the point $\alpha^*_i(p)$, the image of the map is a collection of three embedded minimal surfaces  meeting along a curve.  This curve is real analytic since it is the image of real analytic map $\alpha^*_i \circ \phi_i \big|_{A_i}$ by Theorem~\ref{creg}.    We can thus apply Corollary~12 of  \cite{mese-yamada}
to see that there exists a neighborhood $U$ of $p$ so that  three minimal surfaces $\alpha^*_i(\triangle_i \cap U)$ meet along
the free boundary $e_{\alpha^*}:=\alpha^*_i(A_i \cap U)$ at
$120^o$ angles.

We summarize the preceding statements as follows.

\begin{theorem} \label{three}
Assume $\Phi^* \in {\cal P}, c^* \in {\cal C}$ and $\alpha^* \in {\cal F}(\Phi^*)$  satisfies the equality (\ref{variationalproblem}).  If $\Phi^* \in C^{2,\alpha}$ and non-singular, then for every regular point $p \in R \cap A \subset X_{{\rm Id}}$, there is a neighborhood of 
the point $\alpha^*(p)$, where the image of $\alpha^*$ is an union of three embedded surfaces $\Sigma_1,\Sigma_2$ and $\Sigma_3$ meeting along an embedded real analytic curve $\gamma$.   The unit normal vectors $\eta_1,\eta_2,\eta_3$ of $\Sigma_1,\Sigma_2,\Sigma_3$ along $\gamma$ satisfies the geometric balancing condition:
\begin{equation} \label{geombal}
\sum_{i=1}^3 \eta_i=0
\end{equation}
at every point of $\gamma$.   
\end{theorem}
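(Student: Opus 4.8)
The plan is to assemble Theorem~\ref{three} directly from the results already established, treating the three geometric assertions — embeddedness of the $\Sigma_i$, real analyticity of $\gamma$, and the geometric balancing condition \eqref{geombal} — as separate consequences of Theorem~\ref{creg}, Theorem~\ref{kreg}, and Corollary~12 of \cite{mese-yamada}. First I would fix a regular point $p \in R \cap A$. By Theorem~\ref{creg}, each $\alpha^*_i$ is a weakly conformal harmonic map lying in $C^{1,\gamma}(\triangle_i \cup A_i)$, and $\alpha^*_i \circ \phi_i|_{A_i}$ is real analytic; since $p$ is a regular point, $d\alpha^*_i(p)$ has rank two, so by continuity of $d\alpha^*_i$ there is a neighborhood $U$ of $p$ on which $d\alpha^*_i$ has rank two for each $i$. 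A weakly conformal immersion that is harmonic is a (branched, but here unbranched on $U$) minimal immersion; shrinking $U$ if necessary, each $\alpha^*_i(\triangle_i \cap U)$ is an embedded minimal surface $\Sigma_i$, and $\gamma := \alpha^*_i(A_i \cap U)$ is their common boundary curve. Real analyticity of $\gamma$ is immediate: it is the image of the real analytic map $\alpha^*_i \circ \phi_i|_{A_i}$ (Theorem~\ref{creg}), reparameterized by the real analytic inverse of the arc-length map, and embeddedness follows from the rank-two condition after further shrinking of $U$.

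Next, to get the $120^\circ$ meeting angle and thereby the balancing condition, I would invoke Corollary~12 of \cite{mese-yamada} as already indicated in the text: since near $\alpha^*(p)$ the image is three embedded minimal surfaces meeting along an embedded real analytic curve, that corollary yields (after possibly shrinking $U$ once more) that the three surfaces meet pairwise at $120^\circ$ along $\gamma$. The final step is the translation of "$120^\circ$ meeting" into the vector identity $\sum_{i=1}^3 \eta_i = 0$. Here I would argue pointwise along $\gamma$: fix $q \in \gamma$ and let $T$ be the unit tangent to $\gamma$ at $q$. For each $i$, let $\nu_i$ be the unit "inward conormal" of $\Sigma_i$ at $q$, i.e. the unit vector tangent to $\Sigma_i$, orthogonal to $T$, pointing into $\Sigma_i$; these three vectors lie in the $2$-plane $T^\perp$. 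The balancing condition coming from the variational (matching/balancing) equations \eqref{mf}–\eqref{bf} — or equivalently the first-variation/stationarity of the energy-minimizing configuration — forces $\sum_i \nu_i = 0$ in $T^\perp$, which for three unit vectors is exactly the statement that consecutive ones make $120^\circ$ angles. Finally, the unit normal $\eta_i$ of $\Sigma_i$ along $\gamma$ is obtained from $\nu_i$ by a fixed rotation by $\pi/2$ within $T^\perp$ (the rotation taking the conormal frame to the normal frame), and a rotation is linear, so $\sum_i \nu_i = 0$ gives $\sum_i \eta_i = 0$.

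The main obstacle is making the last translation rigorous and coordinate-free — namely extracting $\sum_i \nu_i = 0$ from the balancing condition \eqref{bf}. The subtlety is that \eqref{bf} is written in terms of the domain coordinates $\partial u_i / \partial y$ on the half-disks $\triangle_i^+$, whereas the geometric statement is about conormals of the image surfaces in ${\bf R}^n$. One has to check that, because each $\alpha^*_i$ (composed with the conformal reparameterization from the proof of Theorem~\ref{kreg}) is weakly conformal, the domain $y$-derivative $\partial u_i/\partial y$ at a boundary point is a positive multiple — with the \emph{same} multiple for all $i$, thanks to conformality and the matching of the induced boundary parameterizations — of the geometric inward conormal $\nu_i$; then \eqref{bf} transfers verbatim. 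This uses crucially that the isothermal-type parameterization matches along $\gamma$, which is precisely the content carried over from Theorem~\ref{kreg}/\ref{creg}, so I would phrase this step as a short lemma: at a regular free-boundary point, conformality plus matching implies $\partial u_i/\partial y = \lambda\, \nu_i$ with $\lambda>0$ independent of $i$, whence \eqref{bf} is equivalent to $\sum_i \nu_i = 0$, equivalently $\sum_i \eta_i = 0$. Everything else is bookkeeping with neighborhoods already justified by the cited results.
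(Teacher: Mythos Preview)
Your approach is essentially the paper's own: the theorem is stated there as a summary of the preceding paragraph, which does exactly what you do --- use Theorem~\ref{creg} to get $C^{1,\gamma}$ regularity of $\alpha^*_i$ and real analyticity of $\alpha^*_i\circ\phi_i|_{A_i}$, use the rank-two condition at a regular point to get three embedded minimal surfaces meeting along a real analytic curve, and then invoke Corollary~12 of \cite{mese-yamada} for the $120^\circ$ condition.

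One point to correct: in the paper's usage, $\eta_i$ \emph{is} the outward unit conormal of $\Sigma_i$ along $\gamma$ (the unit vector tangent to $\Sigma_i$, orthogonal to $\gamma$, pointing away from $\Sigma_i$); see the computation at the start of Section~5, where $\eta_i$ is identified with $\dfrac{\partial f_i/\partial y}{|\partial f_i/\partial y|}$, and the phrase ``unit outer normal to the surface $\Sigma_1$ along $\gamma$'' in the proof of Theorem~\ref{lewy'}. So your vectors $\nu_i$ (up to sign) are already the $\eta_i$, and the identity $\sum_i\nu_i=0$ that you derive \emph{is} \eqref{geombal}. Your final ``rotation by $\pi/2$ within $T^\perp$'' step is both unnecessary and ill-defined in ${\bf R}^n$ for $n>3$, since $T^\perp$ is then $(n-1)$-dimensional and there is no canonical unit normal to a surface. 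Drop that step and your argument matches the paper exactly.
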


By reparameterizing $X_{{\rm Id}}$, we have the following local uniformization of the singular minimal surfaces near $\alpha(p)$ for every regular point $p \in R \cap A$.

\begin{theorem} \label{isothermal}
Assume $\Phi^* \in {\cal P}, c^* \in {\cal C}$ and $\alpha^* \in {\cal F}(\Phi^*)$  satisfies the equality (\ref{variationalproblem}).  If $\Phi^* \in C^{2,\alpha}$ and non-singular, then for every regular point $p \in R \cap A \subset X_{{\rm Id}}$, there exists conformal, harmonic map $f =(f_1,f_2,f_3):Y_0 \rightarrow {\bf R}^n$   parameterizing the image of $\alpha$ in a neighborhood of $\alpha(p)$
\end{theorem}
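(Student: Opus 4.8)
The plan is to build the map $f$ one sheet at a time and then check that the three pieces fit together along $I$. Fix a regular point $p \in R \cap A$. By Theorem~\ref{three}, after shrinking we may assume the image of $\alpha^*$ near $\alpha^*(p)$ consists of three embedded minimal surfaces $\Sigma_i = \alpha^*_i(\triangle_i \cap U)$ meeting along the embedded real analytic curve $\gamma = e_{\alpha^*}$. Each $\alpha^*_i$ is weakly conformal, harmonic, and lies in $C^{1,\gamma}(\triangle_i \cup A_i)$ with $d\alpha^*_i$ of rank two near $p$ (by definition of $R$), so $\alpha^*_i$ is a genuine conformal immersion on a half-neighborhood $V_i$ of $p$ in $\triangle_i \cup A_i$ mapping $A_i \cap V_i$ onto (an arc of) $\gamma$. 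The induced metric $(\alpha^*_i)^* g_0$ on $V_i$ is then conformal to the flat metric, and the boundary arc $A_i$ is a $C^{1,\gamma}$ (indeed, since $\gamma$ is real analytic and $\alpha^*_i\circ\phi_i$ is real analytic on $A_i$ by Theorem~\ref{creg}, a real analytic) curve in the conformal structure.

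First I would produce, for each $i$, a conformal map $g_i$ from the half-disk $\overline{\triangle_i^+}$ onto a neighborhood of $p$ in $(V_i, (\alpha^*_i)^*g_0)$ sending $I_i$ into $A_i$ and $(0,0)$ to $p$. This is the classical existence of isothermal coordinates for a surface with real analytic boundary: locally near a boundary point one can conformally parameterize by a half-disk with the boundary going to the boundary (via the Riemann mapping theorem applied after a conformal change of coordinates, or directly by solving a Beltrami equation with reflection across the analytic boundary arc). Then $f_i := \alpha^*_i \circ g_i : \overline{\triangle_i^+} \to \mathbf{R}^n$ is conformal (composition of conformal maps) and harmonic (a conformal reparameterization of a conformal harmonic map into $\mathbf{R}^n$ is again conformal harmonic — equivalently, $f_i$ parameterizes the minimal surface $\Sigma_i$ by isothermal coordinates, hence is harmonic). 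Each $f_i$ is real analytic up to $I_i$ by H.~Lewy's theorem, or simply because $\Sigma_i$ is real analytic up to its analytic boundary $\gamma$.

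The real work is arranging the \emph{matching} $f_1(x,0) = f_2(x,0) = f_3(x,0)$ for all $x \in (-1,1)$: a priori the three parameterizations of the common curve $\gamma$ need not agree. But all three restrictions $f_i|_{I_i}$ are real analytic, regular (since $f_i$ is an immersion up to the boundary) parameterizations of the same analytic arc $\gamma$ through $p = f_i(0,0)$. Hence $f_i|_{I_i} = \sigma \circ \tau_i$ for a common analytic arc-length (or fixed) parameterization $\sigma$ of $\gamma$ and analytic diffeomorphisms $\tau_i$ of intervals with $\tau_i(0) = 0$, $\tau_i' \neq 0$. Precomposing $g_i$ (equivalently $f_i$) with the conformal automorphism of $\overline{\triangle_i^+}$ fixing $(0,0)$ that realizes the boundary reparameterization $\tau_i^{-1}\circ\tau_1$ — such an automorphism exists and is conformal because $\tau_i^{-1}\circ\tau_1$ is a real analytic boundary diffeomorphism, so it extends to a conformal map of the half-disk by the reflection principle / Carathéodory-type extension — we may assume all $\tau_i$ are equal, whence $f_1|_{I_1} = f_2|_{I_2} = f_3|_{I_3}$. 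This gives the matching condition, so $f = (f_1,f_2,f_3)$ is a well-defined conformal harmonic map $Y_0 \to \mathbf{R}^n$, and by construction its image is $\Sigma_1 \cup \Sigma_2 \cup \Sigma_3$, a neighborhood of $\alpha^*(p)$ in the image of $\alpha^*$.

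The main obstacle is the boundary-matching step: one must be careful that the conformal reparameterizations used to align the three boundary curves genuinely extend to conformal self-maps of the half-disks (this is where the real analyticity of $\gamma$ and of $\alpha^*_i\circ\phi_i|_{A_i}$ from Theorem~\ref{creg} is essential — a merely $C^{1,\gamma}$ curve would not suffice to keep the reparameterized $f_i$ conformal and harmonic up to the boundary), and that after this alignment each $f_i$ still parameterizes the \emph{same} neighborhood of $p$, which is handled by shrinking $U$ at the outset so that all three half-disk images cover a common piece of $\gamma$.
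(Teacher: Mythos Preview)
Your overall strategy --- produce isothermal coordinates on each sheet and then align the three boundary parameterizations of $\gamma$ --- is reasonable, but the matching step as written contains a genuine error. You assert that the real analytic boundary diffeomorphism $\tau_i^{-1}\circ\tau_1$ extends to a \emph{conformal automorphism} of $\overline{\triangle_i^+}$. This is false: the conformal automorphisms of a half-disk form a three-parameter M\"obius family, and a generic real analytic diffeomorphism of an interval is not the boundary trace of any such map. Neither the Schwarz reflection principle nor Carath\'eodory's theorem produces a conformal self-map of the half-disk from a prescribed boundary diffeomorphism; both run in the opposite direction (extending a given conformal map to or across the boundary).

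The idea can be repaired, but not with an automorphism. Because $\tau_i^{-1}\circ\tau_1$ is real analytic with nonvanishing derivative at $0$, it extends holomorphically to a complex disk about $0$; since this extension is real on the real axis, Schwarz symmetry forces it to map a small upper half-disk conformally into the upper half-plane. Precomposing $f_i$ with this local conformal map (and then rescaling all three domains to a common half-disk) yields the matching. This is more delicate than what you wrote, and you must check that the resulting domains can be taken uniform in $i$.

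The paper sidesteps this issue entirely. Rather than constructing three isothermal charts independently and reconciling them afterward, it writes down the Beltrami equation for each sheet (whose coefficient is real analytic up to $I_i$ because $\alpha_i^*\circ\phi_i'$ is) and solves it by Cauchy--Kowalewski with the \emph{same} Cauchy data $u(x,0)=x-x_0$, $v(x,0)=0$ on all three sheets. The matching condition $F_i(x,0)=F_j(x,0)$ is then automatic from the initial data, and no post-hoc boundary reparameterization is needed. This is the cleaner route and explains why real analyticity (not merely $C^{1,\gamma}$) is indispensable: Cauchy--Kowalewski requires analytic coefficients.
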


\begin{proof}
By Theorem~\ref{creg}, there exists an extension $\Phi'$ of $\Phi^*$ so that $\alpha^*_i \circ \phi_i$ is real analytic in $\triangle_i \cup A_i$.    Let $^iG$ be the the pull back of the Euclidean metric on ${\bf R}^n$ under the map $\alpha^*_i \circ \phi'_i$ and $^ig$ be the pull back of the Euclidean metric on $\triangle_i$ under the map $\phi'_i$.  By Theorem~\ref{creg}, $^iG$ is $C^{\omega}(\triangle_i \cup A_i)$ and  $^ig$ is $C^{1,\gamma}(\triangle_i \cup A_i)$.   As a $c^*$-weighted $l^2$-energy minimizer, $\alpha^*_i$ is  weakly conformal and hence $\alpha^*_i \circ \phi'_i$  is weakly conformal with respect to  $^ig$.   Therefore  $^iG$  is conformally equivalent to $^ig$, i.e. $^iG=\lambda ^ig$ for some nonnegative function $\lambda$.    With respect to the Euclidean coordinates of $\triangle_i$, denote the metric components
of $^ig$ by $^ig_{\alpha \beta}$, and $^iG$ by $^iG_{\alpha \beta}$.  We  now wish find an isothermal coordinate by solving the Beltrami equation
\[
w_{\overline{z}} = \mu w_z
\]
where Beltrami coefficient $\mu$ is given by
\[
\mu = \frac{ ^ig_{11}- \ ^ig_{22} +2\sqrt{-1} \ ^ig_{12} }{^ig_{11}+^ig_{22}+2\sqrt{ ^ig_{11} \ ^ig_{22}- \ ^ig_{12}^2}}=\frac{ ^iG_{11}- \ ^iG_{22} +2\sqrt{-1} \ ^iG_{12} }{^iG_{11}+\ ^iG_{22}+2\sqrt{ ^iG_{11} \ ^iG_{22}- \ ^iG_{12}^2}}.
\]
Note here that the Beltrami coefficient $\mu$ is represented by the pull-back metric $^iG=(\alpha_i \circ \phi'_i)^* G_0$ as well as  by the pull-back metric
$^ig=(\phi'_i)^* g_0$.  As $\phi'_i$ is a $C^{2, \alpha}$-diffeomorphism of bounded dilatation, the Beltrami coefficient $\mu$ has moduli strictly less than one 
and is $C^{1, \alpha}$.  Furthermore, the metric components $^iG_{\alpha \beta}$ are given by
\[
^iG_{\alpha \beta} = \langle \frac{\partial (\alpha_i \circ \phi'_i)}{\partial x^{\alpha}}, \frac{\partial (\alpha_i \circ \phi'_i)}{\partial x^{\beta}} \rangle_{{\bf R}^n}.
\] 
Thus, the components of $^iG$ are real analytic on $\triangle \cup A_i$ since the map $\alpha_i  \circ \phi'_i$ is real analytic there.    Note that the quantity $\sqrt{^iG_{11}\ ^iG_{22}- \ ^iG_{12}^2}$ is the 
 pulled-back area form of the immersed surface $\alpha_i \circ \phi'_i (\triangle_i \cup A_i)$ possibly with isolated branch points in ${\bf R}^n$. 
Away from the branch points, namely in a neighborhood where the differential of the map $\alpha_i \circ \phi_i$ is non-degenerate, 
the quantity $\sqrt{^iG_{11} \ ^iG_{22}- \ ^iG_{12}^2}$ is real analytic, as the term $^iG_{11} \ ^iG_{22}-^iG_{12}^2$ is strictly positive.
Hence $\mu$ is a ratio of two real analytic functions 
on the open set $R$ where the differential of the map $\alpha_i \circ \phi_i$ is of rank two
as the latter function is strictly positive on the set.  


Recall here that there is the conformal reparameterization of the unit disk $\triangle_i$ by the half disk $\triangle^+_i$
via the Riemann mapping $\rho: \triangle^+_i \rightarrow \triangle_i$ fixing $(1,0), (-1,0)$ and $(0, 1)$.    Pull back the Beltrami coefficient $\mu_i$ (which is covariant as $\rho$ is conformal) and the regular set $R$ defined on the disk, onto the half disks by the map $\rho$. By abuse of notation, we use $\mu_i$ and $R$ for the pulled-back coefficient and the pulled-back regular set respectively. 

Let $P = \alpha_i \circ \phi'_i (p)$ be a point on the free boundary with $p= \rho (x_0, 0) \in R \cap A$.  We  set $w(z, \overline{z}) = u(x, y) +iv$
and $\mu = \eta (x,y) + i \zeta (x, y)$ to rewrite the Beltrami 
equation defined on the half disk $\triangle^+$ as the following system of equations with real analytic coefficients:
\[ 
\left( \begin{array}{c}
u_y \\
v_y \end{array} \right)
=
\left( \begin{array}{cc}
\zeta & (1+\eta)  \\
(1-\eta) & \zeta  \end{array} \right)^{-1} 
\left( \begin{array}{c}
(1-\eta)u_x + \zeta v_x \\
-\zeta u_x - (1+\eta) v_x \end{array}
\right)
\] 
The inverse matrix on the  right hand side exists because $|\mu|^2 = \eta^2 + \zeta^2 < 1$.  We also have the Cauchy initial data
\[
u(x, 0) = x-x_0 \mbox{ and } v(x, 0) = 0
\]
for $(x,0) \in I_i$ near $(x_0, 0) \in I_i$.
Therefore, we can apply the Cauchy-Kowalewski Theorem and obtain, in some neighborhood of the point $p$, a unique solution to the Beltrami equation.  This solution $w_i$ is a quasiconformal diffeomorphism from a neighborhood ${\cal U}_i \subset \triangle_i^+ \cup I_i$ of $(x_0, 0)$ to a neighborhood ${\cal V}_i \subset \triangle_i^+ \cup I_i$ of $(0,0)$.  
By construction, the  pulled-back metric of the Euclidean metric $g_0$ of $\triangle_i^+$  under  $w_i$ is conformal to $\rho^*(^ig)$.   Thus the map $w_i^{-1}$ provides a parameterization of the neighborhood of 
$\rho^{-1} (p) =(x_0, 0)$ in $(\triangle^+_i \cup I_i, \rho^*({}^ig))$ by a open set ${\cal V}_i$ in $\triangle^+_i \cup I_i$.  After  scaling and then
composing with $\rho$, we have constructed an isothermal coordinate system $F=(F_1,F_2,F_3):Y_0 \rightarrow (X_{{\rm Id}},g)$ of $p \in A$.  The map $f=(f_1,f_2,f_3)$ with $f_i = \alpha_i \circ \phi_i' \circ F$ satisfies the desired properties of the theorem.
\end{proof}

\section{Multi-sheeted reflections via isothermal coordinates }

Let 
$f=(f_1,f_2,f_3)):Y_0 \rightarrow {\bf
R}^n$  be as in Theorem~\ref{isothermal}.  The equality
\begin{equation} \label{match0}
f_i(x,0)=f_j(x,0)
\end{equation}
for $i,j=1,2,3$ implies 
\begin{equation} \label{match}
\frac{\partial f_i}{\partial x}(x,0)=\frac{\partial
f_j}{\partial x}(x,0).
\end{equation}
 Using the conformality of $f_i$,  (\ref{geombal}) can be written as
\[
0=\sum_{i=1}^3  \frac{\frac{\partial f_i}{\partial
y}}{|\frac{\partial f_i}{\partial y}|} (x,0) =\sum_{i=1}^3
\frac{\frac{\partial f_i}{\partial y}}{|\frac{\partial
f_i}{\partial x}|} (x,0).
\]
 This combined  with with
(\ref{match})  implies
\begin{equation} \label{balancing}
0=\sum_{i=1}^3  \frac{\partial f_i}{\partial y}(x,0).
\end{equation}
If we let
\begin{equation} \label{oddref}
\tilde{f}_1(x,y)=-f_1(x,-y)+\frac{2}{3}
\sum_{i=1}^3 f_i(x,-y)
\end{equation}
then  (\ref{match0})  and (\ref{balancing}) imply that
\begin{equation} \label{setup}
f_1(x,0)=\tilde{f}_1(x,0) \ \mbox{ and } \
\frac{\partial f_1}{\partial y}(x,0)=\frac{\partial
\tilde{f}_1}{\partial y}(x,0).
\end{equation}
We claim (\ref{setup}) shows $U_1:\triangle \rightarrow {\bf R}^n$
defined by setting
\[
U_1(x,y) = \left\{
\begin{array}{ll}
f_1(x,y) & \mbox{for } y \geq 0\\
\tilde{f}_1(x,y)  & \mbox{for } y<0
\end{array}
\right.
\]
is harmonic.  Indeed, for any smooth $\xi:\triangle \rightarrow
{\bf R}^n$ with compact support, integration by parts gives
\[
-\int_{\triangle^+} \nabla \xi \cdot \nabla U_1 dxdy =
\int_{\triangle^+} \xi \triangle f_1 dxdy - \int_I \xi
\frac{\partial f_1}{\partial y}(x,0) dx
\]
and
\[
-\int_{\triangle^-} \nabla \xi \cdot \nabla U_1 dxdy =
\int_{\triangle^-} \xi \triangle \tilde{f}_1 dxdy + \int_I
\xi \frac{\partial \tilde{f}_1}{\partial y}(x,0) dt
\]
where $\triangle^+=\{ (x,y) \in \triangle: y>0\}$, $\triangle^-=\{(x,y) \in \triangle: y<0\}$ and $I=\{(x,y) \in \partial \triangle^+: y=0\}$.   Summing up the above two equations and using the
harmonicity of $f_1$ and $\tilde{f}_1$, we obtain
\[
-\int_{\triangle}  \nabla \xi \cdot \nabla U_1 dxdy =0.
\]
By Weyl's Lemma, $U_1$ is a $C^{\omega}$ harmonic map.   Similarly, there exists $C^{\omega}$ extensions $U_2,U_3$ of $f_2$ and $f_3$.  
We call this construction of the real analytic extension $U_i$ of $f_i$ the {\it multi-sheeted reflection.}

By summarizing the argument above, we have

\begin{theorem} \label{reflection}
Let $\Sigma_1$, $\Sigma_2$, $\Sigma_3$ and $\gamma$ as in Theorem~\ref{three}.   The surface $\Sigma_i$ can be extended real analytically across the curve $\gamma$.  This extended surface is parametrized by the conformal, harmonic map  $U_i$ via the multi-sheeted reflection. 
\end{theorem}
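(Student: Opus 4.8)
The plan is to build the extension directly from the conformal, harmonic parametrization $f=(f_1,f_2,f_3):Y_0\to{\bf R}^n$ produced by Theorem~\ref{isothermal}, whose three components parametrize the sheets $\Sigma_1,\Sigma_2,\Sigma_3$ near $\alpha^*(p)$ with common free boundary $\gamma$ along the image of $I$. The first step is to upgrade the geometric balancing condition $\sum_{i=1}^3\eta_i=0$ of Theorem~\ref{three} to the analytic identity $\sum_{i=1}^3\frac{\partial f_i}{\partial y}(x,0)=0$ of (\ref{balancing}). This is where conformality enters: differentiating the matching condition (\ref{match0}) in $x$ gives the common tangent vector $\frac{\partial f_i}{\partial x}(x,0)=\frac{\partial f_j}{\partial x}(x,0)$ as in (\ref{match}); conformality of each $f_i$ then forces $\frac{\partial f_i}{\partial y}(x,0)$ to be orthogonal to this tangent vector and of the same length, so $\frac{\partial f_i}{\partial y}(x,0)=|\frac{\partial f_i}{\partial x}(x,0)|\,\eta_i$ with the scalar factor independent of $i$. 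Hence $\sum_i\eta_i=0$ is precisely $\sum_i\frac{\partial f_i}{\partial y}(x,0)=0$.

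Next I would introduce the reflected map $\tilde f_1$ on the lower half disk by (\ref{oddref}), $\tilde f_1(x,y)=-f_1(x,-y)+\frac23\sum_{i=1}^3 f_i(x,-y)$. Since the map $(x,y)\mapsto(x,-y)$ preserves harmonicity, each $f_i(x,-y)$ is harmonic, and therefore so is the linear combination $\tilde f_1$ — this is the key point that the three sheets may be treated as a single multi-valued harmonic function, so linear combinations remain harmonic. A short computation using (\ref{match0}) and (\ref{balancing}) shows $\tilde f_1$ has the same Cauchy data as $f_1$ on $I$, i.e. (\ref{setup}) holds: $\tilde f_1(x,0)=-f_1(x,0)+\frac23\cdot 3f_1(x,0)=f_1(x,0)$, and $\frac{\partial\tilde f_1}{\partial y}(x,0)=\frac{\partial f_1}{\partial y}(x,0)-\frac23\sum_i\frac{\partial f_i}{\partial y}(x,0)=\frac{\partial f_1}{\partial y}(x,0)$. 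One then defines $U_1$ to be $f_1$ on $\{y\ge 0\}$ and $\tilde f_1$ on $\{y<0\}$ and verifies that $U_1$ is weakly harmonic on the full disk $\triangle$: for a test function $\xi$ with compact support one integrates by parts over $\triangle^+$ and $\triangle^-$ separately, the interior terms vanish by harmonicity of $f_1$ and $\tilde f_1$, and the two boundary integrals over $I$, namely $\int_I\xi\frac{\partial f_1}{\partial y}$ and $\int_I\xi\frac{\partial\tilde f_1}{\partial y}$, cancel by (\ref{setup}). Weyl's Lemma then makes $U_1$ a smooth, hence real analytic, harmonic map; by construction it restricts to $f_1$ on the upper half, so $U_1(\triangle)$ is a real analytic surface containing $\Sigma_1$ and extending it across $\gamma$. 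The cases $i=2,3$ follow verbatim after permuting the indices in (\ref{oddref}).

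I expect the only delicate point to be the bookkeeping in the first step — matching the sign and normalization convention that turns the geometric statement $\sum\eta_i=0$ into $\sum\frac{\partial f_i}{\partial y}(x,0)=0$, consistent with the conformality normalization and the $\pi/3$ angle condition — together with the observation that the coefficients $(-1,\tfrac23,\tfrac23,\tfrac23)$ in (\ref{oddref}) are the unique ones doing the job: requiring $\tilde f_1|_{y=0}=f_1|_{y=0}$ pins the coefficients to sum to $1$, and requiring $\frac{\partial\tilde f_1}{\partial y}|_{y=0}=\frac{\partial f_1}{\partial y}|_{y=0}$ then, in view of (\ref{balancing}), forces exactly this combination. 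Everything after that is the classical reflection-across-a-flat-interface argument, so no genuine analytic obstacle remains once the balancing identity is established.
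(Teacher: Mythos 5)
Your proposal is correct and follows essentially the same route as the paper: conformality plus the matching of tangential derivatives converts the geometric balancing $\sum\eta_i=0$ into $\sum\partial_y f_i(x,0)=0$, the combination $\tilde f_1=-f_1(x,-y)+\frac{2}{3}\sum f_i(x,-y)$ is harmonic with the same Cauchy data as $f_1$ on $I$, and the glued map is weakly harmonic (boundary terms cancel in the integration by parts), hence real analytic by Weyl's lemma. Your added observation that the coefficients $(-1,\tfrac23,\tfrac23,\tfrac23)$ are forced by the two Cauchy-data requirements is a nice touch not in the paper, but the argument is otherwise the same.
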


We note that the extendability of the minimal surface $\Sigma_i$ across a real analytic boundary curve  $\gamma$  follows from  a celebrated 
result of H.Lewy~\cite{lewy}.  On the other hand, Theorem~\ref{reflection} gives a more precise picture of the extension.    Indeed, the extension of the parameterization $f_1$  of $\Sigma_1$ is given in terms of a linear combination of odd reflections of $f_1,f_2,f_3$ as defined in (\ref{oddref}).

When three minimal surfaces are geometrically balanced along a  curve in ${\bf R}^3$ (i.e. the unit outer normal of the three surfaces sum to zero as in (\ref{geombal})), 
the entire configuration is completely determined by one of the three surfaces. This follows from the so-called  Bj\"{o}ling's problem resolved by H.Schwarz.   We will explain below how to use this and arguments in the proof of  Theorem~\ref{reflection} to give a  construction of Lewy's extension.  

 \begin{theorem} \label{lewy'}
 A minimal surface in ${\bf R}^3$ with a real analytic boundary can be extended across the boundary
 by a multi-sheeted reflection.
 \end{theorem}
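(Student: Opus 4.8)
The plan is to manufacture from the single surface $\Sigma_1$ a complete {\bf Y}-type configuration of the kind appearing in Theorem~\ref{three}, and then to run the multi-sheeted reflection of Theorem~\ref{reflection} on it. Let $c\colon(-1,1)\to{\bf R}^3$ be an arc-length (hence real analytic, regular) parametrization of the analytic boundary arc $\gamma$. It is a classical fact of the boundary regularity theory for minimal surfaces with real analytic boundary --- part of H.~Lewy's theorem, and in the variational setting of this paper contained in Theorems~\ref{creg} and~\ref{isothermal} --- that $\Sigma_1$ is real analytic up to $\gamma$; consequently the unit normal $\eta_1$ of $\Sigma_1$ along $\gamma$ is a real analytic unit vector field along $c$, orthogonal to $c'$. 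Along $\gamma$ I would put $\eta_2=R_{2\pi/3}\eta_1$ and $\eta_3=R_{4\pi/3}\eta_1$, where $R_\theta$ denotes the rotation of ${\bf R}^3$ by the angle $\theta$ about the oriented tangent line ${\bf R}\,c'(x)$. These are again real analytic unit fields orthogonal to $c'$, and $\eta_1+\eta_2+\eta_3=0$ because three coplanar unit vectors at mutual angles $2\pi/3$ sum to zero; this is precisely the geometric balancing (\ref{geombal}).

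Next I would invoke the classical Bj\"orling problem, resolved by H.~A.~Schwarz: for each $i=1,2,3$ there is a minimal surface $\Sigma_i$, unique as a germ along $\gamma$, that contains $\gamma$ and has unit normal $\eta_i$ along it, given explicitly by a conformal, harmonic, real analytic parametrization $f_i$ of a half-disc with $f_i(x,0)=c(x)$ and with conormal $\partial f_i/\partial y\,(x,0)$ equal to the rotate of $\partial f_1/\partial y\,(x,0)$ by $2\pi(i-1)/3$ about ${\bf R}\,c'(x)$ (here one uses that $R_\theta$ commutes with $\,\cdot\times c'$ when ${\bf R}\,c'$ is the rotation axis). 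Shrinking the domains, all three $f_i$ are defined and immersive on a common half-disc $\triangle_i^+\cup I_i$; and by the uniqueness part of the Bj\"orling problem, after fixing the orientation of $\Sigma_1$, the map $f_1$ parametrizes a one-sided neighbourhood of $\gamma$ in $\Sigma_1$ itself. Thus $f=(f_1,f_2,f_3)\colon Y_0\to{\bf R}^3$ is conformal and harmonic, satisfies the matching condition (\ref{match0}), and $\sum_{i=1}^3\partial f_i/\partial y\,(x,0)=0$ since the three conormals are $120^\circ$-rotates of one another; that is, $f$ satisfies (\ref{balancing}). In particular the three minimal surfaces $\Sigma_i$ meet along $\gamma$ at $120^\circ$ angles --- exactly the configuration of Theorem~\ref{three}.

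At this point the situation is literally the one analysed in Section~5. Forming $\tilde f_1$ as in (\ref{oddref}), the matching condition (\ref{match0}) and the balancing condition (\ref{balancing}) give (\ref{setup}), and then Weyl's Lemma --- word for word as in the proof of Theorem~\ref{reflection} --- shows that the map $U_1$ equal to $f_1$ for $y\ge0$ and to $\tilde f_1$ for $y<0$ is a real analytic harmonic map on the whole disc. Since $U_1\equiv f_1$ on $\{y\ge0\}$, the real analytic surface $U_1(\triangle)$ contains a neighbourhood of $\gamma$ in $\Sigma_1$ and extends it across $\gamma$; by analytic continuation it coincides with Lewy's extension. This exhibits the extension of $\Sigma_1$ as a multi-sheeted reflection, with the explicit formula (\ref{oddref}).

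The step I expect to be the main obstacle is the bookkeeping around the Bj\"orling construction: one must check that prescribing the rotated normals $\eta_i$ indeed produces conormals that are the $2\pi/3$-rotates of the conormal of $\Sigma_1$ (so that the three surfaces meet with the correct $120^\circ$ dihedral angles and $\sum_i\partial f_i/\partial y=0$), that $\Sigma_1$ is correctly identified, via uniqueness, with the Bj\"orling surface of its own boundary data, and that the three germs can be realised on a single common half-disc on which the reflection formula makes sense. All of this is routine manipulation of the Weierstrass--Bj\"orling representation, but it is where care is needed; the analytic extension itself requires nothing beyond what is already established for Theorem~\ref{reflection}.
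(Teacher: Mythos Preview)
Your argument is correct and follows the same overall strategy as the paper: manufacture the two companion surfaces $\Sigma_2,\Sigma_3$ via Bj\"orling with normals rotated by $2\pi/3$ about the tangent line, so that the geometric balancing $\eta_1+\eta_2+\eta_3=0$ holds, and then run the multi-sheeted reflection of Theorem~\ref{reflection}.

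Where you diverge from the paper is in how you obtain the conformal harmonic parametrizations $f_i$ with the common boundary trace $f_i(x,0)=c(x)$. You read these off directly from the Schwarz--Bj\"orling formula, which already produces each $\Sigma_i$ in isothermal coordinates with $f_i(x,0)=c(x)$ and $\partial f_i/\partial y(x,0)=N_i(x)\times c'(x)$; the matching and balancing conditions (\ref{match0}) and (\ref{balancing}) are then immediate. The paper instead first invokes Bj\"orling only to produce the surfaces $\Sigma_2,\Sigma_3$, then builds real analytic (but not conformal) parametrizations $H_i$ via the hodographic projection of \cite{KNS}, and finally solves the Beltrami equation by Cauchy--Kowalewski with the common initial data $u(x,0)=x-x_0$, $v(x,0)=0$ to pass to isothermal coordinates that agree along $I$. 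Your route is shorter and avoids both the hodographic projection and the Beltrami step; the paper's route has the virtue of reusing verbatim the isothermal-coordinate construction already developed in Theorem~\ref{isothermal}, so that Theorem~\ref{lewy'} appears as a direct corollary of that machinery. Either way, once the $f_i$ are in hand the endgame is identical.
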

 
 \begin{proof}
We start with a surface $\Sigma_1$ with a real analytic boundary curve $\gamma$.  
Let $\eta_1$  be the unit outer normal to the surface $\Sigma_1$ along $\gamma$.  
 Let $\eta_2$ and $\eta_3$ be the two unit vector fields 
defined on $\gamma$, normal to $\gamma$, each making the angle of $\pi/3$ to $\eta_1$,
Note here $\eta_1 + \eta_2 + \eta_3 = 0$.  
The solution by Schwarz of the Bj\"{o}rlng's problem (\cite{nitsche} III \S 149)  then provides locally defined, uniquely determined, minimal surfaces $\Sigma_2$ and $\Sigma_3$ along $\gamma$ so that $\eta_2$ and $\eta_3$ are unit outer normals to $\Sigma_2$ and $\Sigma_3$ along $\gamma$ respectively.

The next step is to construct  real analytic local parameterizations of the singular surface $(\cup_{i=1}^3 \Sigma_i) \cup \gamma$ by a continuous map from $Y_0$; i.e.  we construct maps from $\triangle^+_i \cup I_i$ to $\Sigma_i \cup \gamma$ for $i=1,2,3$  so that their restrictions to $I_i$ agree.    For example, one can use the hodographic projection
of ~\cite{KNS} to devise maps satisfying the required condition.     We briefly recall the hodographic projection. 
The surfaces $\Sigma_2$ and $\Sigma_3$ are locally graphs over the tangent plane
$T_q \Sigma_1$.   We suppose that $\Sigma_3$ lies above the plane, $\Sigma_2$ below. 
Let $\Pi_2$ and $\Pi_3$ be the orthogonal projection maps from $\Sigma_2 \cap B_\delta(q)$
and $\Sigma_3 \cap B_\delta(q)$ to $T_q \Sigma_1$ for sufficiently small $\delta$.  We can choose the coordinates so that $q=(0, 0, 0) \in {\bf R}^3$, the tangent line to $\gamma$ at $q$ is the $x_1$-axis and  $T_q \Sigma_1$ is the $x^1x^2$-plane.  Define $u_2,u_3$ by the conditions $\Pi^{-1}_2(x_1,x_2) = (x_1,x_2, u_2(x)) \in \Sigma_2 $ and $\Pi^{-1}_3(x_1,x_2) =(x_1,x_2, u_3(x))\in \Sigma_3$.  Near the origin, the map $h(x_1, x_2) := (x_1, u_3(x)-u_2(x))$ is 
of rank two. It sends $\Pi_2(\gamma) = \Pi_3(\gamma)$ to the $x_1$-axis and its image is contained in the upper half plane.  The map $h \circ \Pi_2$ is the hodographic projection and its inverse map $\Pi_2^{-1} \circ h^{-1}$ defined on a
sufficiently small half disk centered at the origin defines the real analytic parameterization of $\Sigma_2$
around $q$, while $\Pi_3^{-1} \circ h^{-1}$ defines that of $\Sigma_3$.    Similarly, $\Sigma_1$ can be parameterized using the tangent plane $T_q(\Sigma_2)$.  Denote these three maps parameterizing the neighborhood of $q$ by $H=(H_1, H_2, H_3):Y_0 \rightarrow (\cup_{i=1}^3 \Sigma_i) \cup \gamma$ with  $H_i :\triangle_i^+ \cup I_i  \rightarrow \Sigma_i$.  The real analyticity and the continuity of $H$ follows from the construction.  

For each $i=1,2,3$, the pulled-back metric $H_i^*G_0$ on $\triangle_i$ (recall that $G_0$ is the Euclidean metric on ${\bf R}^3$) defines a  conformal structure which 
in turn defines a Beltrami coefficient $\mu_i$.   This Beltrami coefficient is real analytic due to the argument in the proof of Theorem~\ref{isothermal}.     We again solve the Beltrami equation locally around a point $p_0=(x_0, 0) \in A_i \subset \overline{\triangle}_i $ via the Cauchy-Kowalewski theorem.  The solutions to the three Beltrami
equations give a parameterization $F=(F_1, F_2, F_3)$ of a neighborhood of $p_0$ in $X_{{\rm Id}}$ by $(X_{Id}, g_0)$,
an isothermal coordinate system.    Define $f=(f_1, f_2, f_3)$ by $f_i = g_i \circ F_i$.   Since $f_i:(\triangle^+,f^*_iG_0) \rightarrow {\bf R}^3$  is an isometric minimal immersion, $f_i$ is  harmonic with respect to the metric $f^*_iG_0$ (cf. \cite{La}).   As harmonicity is conformally invariant on two dimensional domains, $f_i$ is also harmonic with respect to the Euclidean metric on $\triangle^+_i$.  Hence the resulting parameterization $f_i:\triangle_i^+  \cup I_i \rightarrow (\Sigma_i \cup \gamma) \subset {\bf R}^3$ is
harmonic, conformal and without branch point.  Furthermore, we also have
\[
\sum_{i=1}^3\frac{\partial f_i}{\partial y} = c \sum_{i=1}^3 \eta_i =0
\]
where $c=\left| \frac{\partial f_i}{\partial x} \right|=\left| \frac{\partial f_i}{\partial y} \right|$. Now each $f_i: \triangle_i \rightarrow {\bf R}^3$
can be extended across the real axis $I_i$ by $\tilde{f}_1$ of (\ref{oddref}).  In particular, we have a conformal parameterization of the extension of  $\Sigma_1$ as in Theorem~\ref{reflection}.  
\end{proof}


\begin{thebibliography}{ABCD}



\bibitem[ADN1]{ADN1} S. Agmon, A. Douglis and L. Nirenberg. {\it Estimates near the boundary for solutions of elliptic partial differential equations satisfying general boundary conditions. I.}  Comm. Pure Appl. Math.  12  (1959) 623--727.

\bibitem[ADN2]{ADN2} S. Agmon, A. Douglis and L. Nirenberg. {\it Estimates near the boundary for solutions of elliptic partial differential equations satisfying general boundary conditions. II.}  Comm. Pure Appl. Math. 17  (1964), 35--92.

\bibitem[Alm]{almgren} F.J. Almgren.  {\it Existence and Regularity Almost Everywhere of Solutions to. Elliptic Variational Problems with Constraints.} Memoirs A.M.S., {\bf 165}, Providence, R.I.: AMS, 1976

\bibitem[C]{chen} J. Chen.  {\it On energy minimizing mappings between and into singular spaces.}  Duke Math. J.  79  (1995),  no. 1, 77-99.


\bibitem[DM1]{daskal-mese1}  G. Daskalopoulos and C. Mese.  {\it Harmonic maps from a Riemannian complex.}  Preprint.

\bibitem[DM2]{daskal-mese2}  G. Daskalopoulos and C. Mese.  {\it Fixed point and rigidity theorems for harmonic maps from simplicial complexes to NPC spaces}.  Preprint.  

\bibitem[DM3]{daskal-mese3}  G. Daskalopoulos and C. Mese.  {\it Harmonic maps from a simplicial complex and geometric rigidity}.  J. Differential Geom.  78  (2008),  no. 2, 269--293.

\bibitem[DM4]{daskal-mese4}  G. Daskalopoulos and C. Mese.  {\it Harmonic maps from 2-complexes. } Comm. Anal. Geom.  14  (2006),  no. 3, 497--549.

\bibitem[D]{douglas} J. Douglas. {\it Solution of the problem of
Plateau.}  Trans. AMS 33 (1931) 263-321.

\bibitem[EF]{eells-fuglede} J. Eells and B. Fuglede.  {\it Harmonic maps between Riemannian polyhedra.} Cambridge Tracts in Mathematics, 142. Cambridge University Press, Cambridge, 2001.

\bibitem[F1]{fuglede1} B. Fuglede.  {\it Harmonic maps from Riemannian polyhedra to geodesic spaces with curvature bounded from above.}  Calc. Var. Partial Differential Equations  31  (2008),  no. 1, 99-136.

\bibitem[F2]{fuglede2} B. Fuglede. {\it The Dirichlet problem for harmonic maps from Riemannian polyhedra to spaces of upper bounded curvature}.  Trans. Amer. Math. Soc.  357  (2005),  no. 2, 757-792

\bibitem[F3]{fuglede3} B. Fuglede. {\it Finite energy maps from Riemannian polyhedra to metric spaces.}  Ann. Acad. Sci. Fenn. Math.  28  (2003),  no. 2, 433-458. 

\bibitem[F4]{fuglede4} B. Fuglede. {\it H\"{o}der continuity of harmonic maps from Riemannian polyhedra to spaces of upper bounded curvature.}  Calc. Var. Partial Differential Equations  16  (2003),  no. 4, 375-403. 


\bibitem[GT]{gilbarg-trudinger}  D. Gilbarg and N. Trudinger.  {\it Elliptic Partial Differential Equations of Second Order.}  Springer-Verlag, Berlin, 1977.  

\bibitem[GS]{gromov-schoen}  M. Gromov and R. Schoen.  {\it Harmonic maps into singular spaces and $p$-adic superrigidity for lattices in groups of rank one.}
IHES Publ. Math. No. 76 (1992), 165-246.
 
\bibitem[H]{hartman}  P.  Hartman.  {\it On homotopic harmonic maps}.
Canadian
J. Math. {\bf 19} (1967) 673--683.


\bibitem[KS1]{korevaar-schoen1} N. Korevaar-R. Schoen.  {\it Sobolev spaces and harmonic maps for metric space targets.}  Comm. Anal. Geom.  1  (1993),  no. 3-4, 561-659. 

\bibitem[KS2]{korevaar-schoen2} N. Korevaar-R. Schoen.  {\it Global existence theorems for harmonic maps to non-locally compact spaces.}  Comm. Anal. Geom.  5  (1997),  no. 2, 333--387.

\bibitem[KNS]{KNS} D. Kinderlerer, L. Nirenberg and J. Spruck. {\it
Regularity
in elliptic free boundary problems}. I. J. Anal. Math. {\bf 34}
(1978) 86-119.


\bibitem[La]{La} B. Lawson. {\it Lectures on minimal submanifolds,
Vol.1}. Publish or Perish, 1980.

\bibitem[Le]{lewy} H. Lewy.  {\it On the boundary behavior of minimal surfaces.}  Proc. Nat. Acad. Sci. U. S. A.  {\bf 37},  (1951). 103--110.

\bibitem[LT]{li-tam} P. Li and L-T. Tam.  {\it Uniqueness and regularity of proper harmonic maps.}  Ann. of Math.  137  (1993) 167-201. 


\bibitem[M]{mese}  C. Mese.  {\it Regularity of harmonic maps from a
flat
complex}.  Progress in Non-Linear Differential Equations 133-148.
Birkh\"{a}auser Publications, Basel, 2004.

\bibitem[Mo]{Mo} C. Morrey.  {\it Multiple Integrals in the
Calculus of Variations}, Springer, Berlin, 1966.


\bibitem[MY]{mese-yamada} C. Mese and S. Yamada. {\it The parameterized Steiner problem and the singular Plateau
problem via energy.} Trans.A.M.S. {\bf 358} (2006) 2875-2895.

\bibitem[Ni]{nitsche} J.C.C.Nitsche {\it Lectures on Minimal Surfaces} Cambridge Univ. Press, Cambridge, 1989.

\bibitem[Si]{Si} L. Simon.   {\it On regularity and singularity of energy minimizing maps.}  Lectures in Mathematics ETH Z\"{u}rich. Birkh\"{a}user Verlag, Basel, 1996.

\bibitem[SY]{SY} R. Schoen and S.-T. Yau.  {\it Lectures on Harmonic
Maps}.
International Press, Boston, 1997.

\bibitem[T]{T} J. Taylor. {\it The structure of singularities in
soap-bubble-like and soap-film-like minimal surfaces}. Ann. of
Math. {\bf 103} (1976) 89--539.

\end{thebibliography}
\end{document}